\newtheorem{theorem}{Theorem}
\newenvironment{proof}[1][Proof.]{\begin{trivlist}
\item[\hskip \labelsep {\bfseries #1}]}{\end{trivlist}}
\title{\LARGE \textbf{Interval Total Colorings of
Complete Multipartite Graphs and Hypercubes}}
\author{\normalsize Petros A. Petrosyan$^\dag$$^\ddag$, Nerses A. Khachatryan$^\ddag$}
\date{\small $^\dag$Institute for Informatics and Automation Problems of NAS RA,\\
$^\ddag$Department of Informatics and Applied Mathematics, YSU\\
e-mail: pet\_petros@ipia.sci.am, xachnerses@gmail.com}
\begin{document}
\textheight = 18.1cm \textwidth = 11.4cm \maketitle

\begin{abstract}
A total coloring of a graph $G$ is a coloring of its vertices and
edges such that no adjacent vertices, edges, and no incident
vertices and edges obtain the same color. An interval total
$t$-coloring of a graph $G$ is a total coloring of $G$ with colors
$1,\ldots,t$ such that all colors are used, and the edges incident
to each vertex $v$ together with $v$ are colored by $d_{G}(v)+1$
consecutive colors, where $d_{G}(v)$ is the degree of a vertex $v$
in $G$. In this paper we prove that all complete multipartite graphs
with the same number of vertices in each part are interval total
colorable. Moreover, we also give some bounds for the minimum and
the maximum span in interval total colorings of these graphs. Next,
we investigate interval total colorings of hypercubes $Q_{n}$. In
particular, we prove that $Q_{n}$ ($n\geq 3$) has an interval total
$t$-coloring if and only if $n+1\leq t\leq \frac{(n+1)(n+2)}{2}$.\\

\textbf{Keywords}: Total coloring, Interval total coloring, Interval
coloring, Complete multipartite graph, Hypercube
\end{abstract}

\section{Introduction}\

All graphs considered in this paper are finite, undirected and have
no loops or multiple edges. Let $V(G)$ and $E(G)$ denote the sets of
vertices and edges of $G$, respectively. Let $VE(G)$ denote the set
$V(G)\cup E(G)$. The degree of a vertex $v$ in $G$ is denoted by
$d_{G}(v)$, the maximum degree of vertices in $G$ by $\Delta (G)$
and the total chromatic number of $G$ by $\chi^{\prime\prime}(G)$.
For $S\subseteq V(G)$, let $G[S]$ denote the subgraph of $G$ induced
by $S$, that is, $V(G[S])=S$ and $E(G[S])$ consists of those edges
of $E(G)$ for which both ends are in $S$. For $F\subseteq E(G)$, the
subgraph obtained by deleting the edges of $F$ from $G$ is denoted
by $G-F$. The terms and concepts that we do not define can be found
in \cite{ArsDenHag,West,Yap}.

Let $\left\lfloor a\right\rfloor $ denote the largest integer less
than or equal to $a$. For two positive integers $a$ and $b$ with
$a\leq b$, the set $\left\{a,\ldots,b\right\}$ is denoted by $[a,b]$
and called an interval. For an interval $[a,b]$ and a nonnegative
number $p$, the notation $[a,b]\oplus p$ means $[a+p,b+p]$.

A proper edge-coloring of a graph $G$ is a coloring of the edges of
$G$ such that no two adjacent edges receive the same color. If
$\alpha $ is a proper edge-coloring of $G$ and $v\in V(G)$, then
$S\left(v,\alpha \right)$ denotes the set of colors appearing on
edges incident to $v$. A proper edge-coloring of a graph $G$ is an
interval $t$-coloring \cite{ArsKam} if all colors are used, and for
any $v\in V(G)$, the set $S\left(v,\alpha \right)$ is an interval of
integers. A graph $G$ is interval colorable if it has an interval
$t$-coloring for some positive integer $t$. The set of all interval
colorable graphs is denoted by ${\cal N}$. For a graph $G\in {\cal
N}$, the least (the minimum span) and the greatest (the maximum
span) values of $t$ for which $G$ has an interval $t$-coloring are
denoted by $w(G)$ and $W(G)$, respectively. A total coloring of a
graph $G$ is a coloring of its vertices and edges such that no
adjacent vertices, edges, and no incident vertices and edges obtain
the same color. If $\alpha$ is a total coloring of a graph $G$, then
$S\left[v,\alpha \right]$ denotes the set $ S\left(v,\alpha
\right)\cup\{\alpha(v)\}$.

A graph $K_{n_{1},\ldots,n_{r}}$ is a complete $r$-partite ($r\geq
2$) graph if its vertices can be partitioned into $r$ independent
sets $V_{1},\ldots,V_{r}$ with $\vert V_{i}\vert=n_{i}$ ($1\leq
i\leq r$) such that each vertex in $V_{i}$ is adjacent to all the
other vertices in $V_{j}$ for $1\leq i<j\leq r$. A complete
$r$-partite graph $K_{n,\ldots,n}$ is a complete balanced
$r$-partite graph if $\vert V_{1}\vert =\vert V_{2}\vert =\cdots
=\vert V_{r}\vert=n$. Clearly, if $K_{n,\ldots,n}$ is a complete
balanced $r$-partite graph with $n$ vertices in each part, then
$\Delta(K_{n,\ldots,n})=(r-1)n$. Note that the complete graph
$K_{n}$ and the complete balanced bipartite graph $K_{n,n}$ are
special cases of the complete balanced $r$-partite graph. The total
chromatic numbers of complete and complete bipartite graphs were
determined by Behzad, Chartrand and Cooper \cite{BehCharCoop}. They
proved the following theorems:

\begin{theorem}
\label{mytheorem1} For any $n\in \mathbf{N}$, we have
\begin{center}
$\chi^{\prime\prime}(K_{n})=\left\{
\begin{tabular}{ll}
$n$, & if $n$ is odd, \\
$n+1$, & if $n$ is even. \\
\end{tabular}%
\right.$
\end{center}
\end{theorem}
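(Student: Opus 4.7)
The plan is to establish matching lower and upper bounds on $\chi''(K_n)$ by combining a counting argument on color classes with constructions drawn from one-factorizations of complete graphs.

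For the lower bound, I would examine the structure of a single color class in a total coloring of $K_n$. Such a class is the union of a matching and an independent set of vertices, no vertex of which is incident to any edge of the matching. Since $K_n$ has no two non-adjacent vertices, the independent part has size at most one, and if one vertex is used it must also miss every edge of the matching part. A short case analysis then gives the maximum class size as $(n+1)/2$ when $n$ is odd (one vertex together with a perfect matching on the remaining $n-1$ vertices) and $n/2$ when $n$ is even (either a perfect matching on $V(K_n)$, or one vertex plus a matching of size $(n-2)/2$ on the rest). Dividing $|VE(K_n)| = n(n+1)/2$ by these maxima yields $\chi''(K_n)\geq n$ in the odd case and $\chi''(K_n)\geq n+1$ in the even case.

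For the upper bound when $n$ is odd, I would invoke the one-factorization of $K_{n+1}$, which exists because $n+1$ is even, to partition its edges into $n$ perfect matchings $M_1,\ldots,M_n$. Fixing a vertex $v_0\in V(K_{n+1})$ and letting $u_i$ denote the partner of $v_0$ in $M_i$, the vertices $u_1,\ldots,u_n$ enumerate $V(K_n)$, where $K_n := K_{n+1}-v_0$. I would then assign color $i$ to every edge of $M_i\setminus\{v_0u_i\}$ and also to the vertex $u_i$. Since $u_i$ is incident to no edge of $M_i\setminus\{v_0u_i\}$, no vertex--edge incidence receives a repeated color, and each color class forms a matching of $K_n$, so the edge-coloring component is proper.

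For $n$ even I would bootstrap: apply the previous construction to the odd graph $K_{n+1}$ to obtain a total $(n+1)$-coloring, then delete one vertex (say the one receiving color $j$) to recover $K_n$. The restricted coloring is automatically valid; the only concern is whether color $j$ vanishes with the deleted vertex. However, the matching realizing color $j$ inside $K_{n+1}$ has size $n/2\geq 1$ for $n\geq 2$ and none of its edges touches the deleted vertex, so color $j$ persists on at least one edge of $K_n$. Together with the matching lower bound, this closes the even case.

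The most delicate step is the color-class bound in the lower half of the argument: one has to notice that a class may legitimately contain a vertex together with edges disjoint from it, and that in $K_n$ this forces the corresponding matching to sit inside $K_{n-1}$, where parity is the opposite of $n$'s. Once this observation is in hand, the rest is essentially bookkeeping, and no extremal machinery is needed beyond the classical one-factorizations of complete graphs of even order.
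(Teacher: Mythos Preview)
The paper does not supply a proof of this theorem; it is quoted as a background result of Behzad, Chartrand and Cooper and used only as input to later arguments. Your proposal therefore cannot be compared against a proof in the paper, but it stands on its own as a correct argument: the color-class counting gives the exact lower bounds, and the one-factorization construction (near-one-factorization of $K_n$ obtained by deleting a vertex from a one-factorization of $K_{n+1}$) provides the matching upper bounds. One small remark: in the even case the worry about a color disappearing after deleting a vertex is unnecessary for establishing $\chi''(K_n)\le n+1$, since using fewer than $n+1$ colors would only strengthen the upper bound; your lower bound already rules that out.
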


\begin{theorem}
\label{mytheorem2} For any $m,n\in \mathbf{N}$, we have
\begin{center}
$\chi^{\prime\prime}(K_{m,n})=\left\{
\begin{tabular}{ll}
$\max \{m,n\}+1$, & if $m\neq n$, \\
$n+2$, & if $m=n$. \\
\end{tabular}%
\right.$
\end{center}
\end{theorem}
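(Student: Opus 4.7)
The plan is to handle the two cases separately, establishing a lower bound by a standard degree/counting argument and an upper bound by an explicit construction.

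First, I would set up notation: write $V_{1}=\{u_{1},\ldots,u_{m}\}$ and $V_{2}=\{v_{1},\ldots,v_{n}\}$, assume without loss of generality that $m\geq n$, and recall the trivial lower bound $\chi^{\prime\prime}(G)\geq \Delta(G)+1$. Since $\Delta(K_{m,n})=m$, this already gives $\chi^{\prime\prime}(K_{m,n})\geq m+1$, which matches the claim when $m>n$. So for the case $m\neq n$ the only work is to build an explicit total $(m+1)$-coloring. My proposal is to assign to the edge $u_{i}v_{j}$ the color $(i+j)\bmod(m+1)$ (taking representatives in $\{0,1,\ldots,m\}$). A quick check shows this is a proper edge coloring: two edges sharing $u_{i}$ have distinct $j$'s and hence distinct values of $i+j$ modulo $m+1$ (as $|j-j'|\leq n-1<m+1$), and the same reasoning works at each $v_{j}$. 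Then I would color $v_{j}$ by the unique color missing at $v_{j}$, which is $j\bmod(m+1)$, and color each $u_{i}$ by a color not already used on its $n$ incident edges nor on any vertex of $V_{2}$; since there are $m+1$ colors and only $2n\leq m+n<2(m+1)$ forbidden occurrences, a case-by-case choice (e.g.\ color $0$ or $n+1$ depending on the position of $i$) succeeds.

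For the case $m=n$, I would first dispose of the upper bound by the easiest construction: take a $1$-factorization of $K_{n,n}$ into $n$ perfect matchings, color the matching $M_{k}$ with color $k$ for $k\in[1,n]$, and color every vertex of $V_{1}$ with $n+1$ and every vertex of $V_{2}$ with $n+2$. No conflicts arise because vertices in the same part are non-adjacent, the two part colors differ, and neither $n+1$ nor $n+2$ is used on any edge. This gives $\chi^{\prime\prime}(K_{n,n})\leq n+2$.

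The main obstacle is the matching lower bound $\chi^{\prime\prime}(K_{n,n})\geq n+2$. The plan here is a counting argument on color classes. Suppose for contradiction that a total $(n+1)$-coloring exists. In any color class $C$, the edges form a matching of some size $k$ (with $0\leq k\leq n$), covering exactly $k$ vertices of $V_{1}$ and $k$ vertices of $V_{2}$. The vertices belonging to $C$ must be pairwise non-adjacent and disjoint from the endpoints of the matching; since any $u\in V_{1}$ and $v\in V_{2}$ are adjacent in $K_{n,n}$, those vertices all lie in a single part, giving at most $n-k$ of them. Hence each color class contributes at most $k+(n-k)=n$ elements of $VE(K_{n,n})$. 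Summing over $n+1$ classes yields at most $n(n+1)=n^{2}+n$ colored elements, whereas $|VE(K_{n,n})|=n^{2}+2n$, a contradiction. Combining the two bounds settles the $m=n$ case.

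I expect the edge-coloring construction for $m>n$ to be routine, and the $(n+2)$-coloring of $K_{n,n}$ to be immediate; the substantive step is the counting argument above, so that is where I would be most careful to track exactly which vertices and edges a single color class can contain.
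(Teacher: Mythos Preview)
The paper does not prove this theorem; it is quoted as a background result of Behzad, Chartrand and Cooper, so there is no proof in the paper to compare against. Your argument for the case $m=n$ is correct: the $(n+2)$-coloring is standard, and the counting argument for the lower bound is clean and complete.

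There is, however, a genuine gap in your construction for $m>n$. With your edge-coloring $u_iv_j\mapsto (i+j)\bmod(m+1)$ and vertex colors $v_j\mapsto j\bmod(m+1)$, the forbidden set for $u_i$ is
\[
\{\,i+1,\ldots,i+n \pmod{m+1}\,\}\ \cup\ \{1,\ldots,n\},
\]
and this can exhaust all $m+1$ colors. Concretely, take $m=3$, $n=2$: the edges at $u_2$ receive colors $3$ and $0$, while $v_1,v_2$ receive colors $1$ and $2$, so every color in $\{0,1,2,3\}$ is forbidden at $u_2$. Your justification ``$2n\leq m+n<2(m+1)$'' does not bound the forbidden set below $m+1$, and the suggested fallback ``color $0$ or $n+1$'' fails here as well. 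The fix is easy: use only $m$ colors on the edges (say $u_iv_j\mapsto ((i+j-2)\bmod m)+1$), give every $v_j$ the single unused color $m+1$, and give each $u_i$ one of the $m-n\geq 1$ edge-colors missing at $u_i$; since all $v_j$ share the color $m+1$, any such choice avoids the neighbors' colors too.
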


A more general result on total chromatic numbers of complete
balanced multipartite graphs was obtained by Bermond \cite{Bermond}.

\begin{theorem}
\label{mytheorem3} For any complete balanced $r$-partite graph
$K_{n,\ldots,n}$ ($r\geq 2,n\in \mathbf{N}$), we have
\begin{center}
$\chi^{\prime\prime}(K_{n,\ldots,n})=\left\{
\begin{tabular}{ll}
$(r-1)n+2$, & if $r=2$ or $r$ is even and $n$ is odd, \\
$(r-1)n+1$, & otherwise. \\
\end{tabular}%
\right.$
\end{center}
\end{theorem}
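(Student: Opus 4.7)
My plan is to prove the lower and upper bounds separately, using a density/parity count for the former and an explicit algebraic construction for the latter. The trivial bound $\chi''(K_{n,\ldots,n})\geq (r-1)n+1$ holds because any total coloring needs at least $\Delta+1$ colors. To force the extra unit in the two special cases, I would assume for contradiction a total coloring with exactly $(r-1)n+1$ colors. For each color $c$, let $I_c$ be the set of vertices and $M_c$ the matching of edges of color $c$. Since $I_c$ and $V(M_c)$ are disjoint, $|I_c|+2|M_c|\leq rn$, and summing over all $(r-1)n+1$ colors gives $\sum_c(|I_c|+2|M_c|)=|V(G)|+2|E(G)|=rn((r-1)n+1)$, so equality holds for every $c$. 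Thus each $M_c$ is a perfect matching of $V(G)\setminus I_c$, each $I_c$ lies in a single part, and the integrality of $|M_c|=(rn-|I_c|)/2$ forces $|I_c|\equiv rn\pmod{2}$. For $r=2$ the perfect-matching requirement on $K_{n-|I_c|,n}$ forces $|I_c|=0$ for every $c$, contradicting $\sum_c|I_c|=rn$. For $r\geq 3$ with $rn$ even, every $|I_c|$ must be even; summing $|I_c|$ over those colors whose vertex class lies in a fixed part $P_i$ gives $|P_i|=n$, which is impossible when $n$ is odd. This is precisely the case $r$ even, $n$ odd, completing the lower bound in both special cases.

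For the upper bound $(r-1)n+2$ in the special cases, the total coloring conjecture for complete multipartite graphs - or a direct argument that starts from a $(\Delta+1)$-edge coloring (available by Vizing) and assigns each part a vertex color outside its edges' spectrum - suffices. For the "otherwise" case $r\geq 3$ with $r$ odd or $n$ even, I would produce a total $(\Delta+1)$-coloring explicitly. Labeling vertices as $(i,j)$ with $i\in\{0,\ldots,r-1\}$ and $j\in\{0,\ldots,n-1\}$, and working modulo $(r-1)n+1$, assign the edge between $(i,j)$ and $(i',j')$ (with $i\neq i'$) a color of the form $j+j'+\varphi(i,i')\bmod((r-1)n+1)$ for a suitable symmetric shift $\varphi$, so that at every vertex exactly one color is absent from the edge spectrum. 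The assignment of that "missing color" to the vertex must then be a proper vertex coloring, which the parity hypotheses ($r$ odd or $n$ even) are designed to ensure.

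The main obstacle is this construction in the "otherwise" case: one must tune $\varphi$ so that (i) the edge coloring is proper, (ii) at each vertex exactly one of the $(r-1)n+1$ colors is missing, and (iii) the resulting deficiency function is a proper vertex coloring of $K_{n,\ldots,n}$ (equivalently, constant on each part with distinct values across parts, or at least respecting independence). Treating the subcases $r$ odd and $r$ even with $n$ even uniformly is delicate; I would likely split into these two subcases with different choices of $\varphi$ and verify each by direct calculation modulo $(r-1)n+1$, using the oddness of $r$ (resp.\ evenness of $n$) to close up the arithmetic.
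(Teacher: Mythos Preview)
The paper does not prove this theorem at all: it is quoted, with attribution to Bermond, as background for the later results on interval total colorings. So there is no argument in the paper to compare your proposal against.

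On the merits of your sketch: the lower-bound half is essentially correct and is the standard counting. From a hypothetical total $((r-1)n+1)$-coloring the identity $\sum_c(|I_c|+2|M_c|)=rn((r-1)n+1)$ forces $|I_c|+2|M_c|=rn$ for every color, and since any independent set of $K_{n,\ldots,n}$ lies in a single part, the parity constraint $|I_c|\equiv rn\pmod 2$ and the per-part summation $\sum_{I_c\subseteq P_i}|I_c|=n$ dispose of the two special cases exactly as you say.

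The upper-bound half, however, has genuine gaps beyond the one you flag. For the ``otherwise'' case you already note that the shift function $\varphi$ must be chosen with care; this is the substantive combinatorial content of Bermond's construction and cannot be finessed. But your treatment of the special-case upper bound is also incomplete. Citing ``the total coloring conjecture for complete multipartite graphs'' is circular, since that is exactly the statement at hand. And the alternative you propose---take a $(\Delta+1)$-edge coloring from Vizing and give each part a vertex color missing from its incident edges---does not work from an arbitrary such edge coloring: at each vertex precisely one of the $\Delta+1$ colors is absent, but nothing guarantees this absent color is constant across a part, and with $r\geq 4$ parts and only one spare color you cannot properly color the vertices otherwise. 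One really needs to \emph{construct} an edge coloring whose deficiency pattern is constant on parts (and distinct across enough of them), which is again nontrivial and is part of what Bermond's paper does.
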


An interval total $t$-coloring of a graph $G$ is a total coloring
$\alpha$ of $G$ with colors $1,\ldots ,t$ such that all colors are
used, and for any $v\in V(G)$, the set $S\left[v,\alpha \right]$ is
an interval of integers. A graph $G$ is interval total colorable if
it has an interval total $t$-coloring for some positive integer $t$.
The set of all interval total colorable graphs is denoted by ${\cal
T}$. For a graph $G\in {\cal T}$, the least (the minimum span) and
the greatest (the maximum span) values of $t$ for which $G$ has an
interval total $t$-coloring are denoted by $w_{\tau }\left(G\right)$
and $W_{\tau }\left(G\right)$, respectively. Clearly,

\begin{center}
$\chi^{\prime \prime}\left(G\right)\leq w_{\tau }\left(G\right)\leq
W_{\tau }\left(G\right)\leq \vert V(G)\vert + \vert E(G)\vert$ for
every graph $G\in {\cal T}$.
\end{center}

The concept of interval total coloring was introduced by the first
author \cite{Petros1}. In \cite{Petros1,Petros2}, the author proved
that if $m+n+2-\gcd(m,n)\leq t\leq m+n+1$, then the complete
bipartite graph $K_{m,n}$ has an interval total $t$-coloring. Later,
the first author and Torosyan \cite{PetTor} obtained the following
results:

\begin{theorem}
\label{mytheorem4} For any $m,n\in \mathbf{N}$, we have
\begin{center}
$W_{\tau }(K_{m,n})=\left\{
\begin{tabular}{ll}
$m+n+1$, & if $m=n=1$,\\
$m+n+2$, & otherwise.\\
\end{tabular}%
\right.$
\end{center}
\end{theorem}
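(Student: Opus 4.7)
The plan is to separately handle $K_{1,1}$, prove the upper bound $W_{\tau}(K_{m,n})\leq m+n+2$ in the remaining cases by a counting argument, and exhibit explicit interval total $(m+n+2)$-colorings attaining it. The graph $K_{1,1}$ has only three elements, any total coloring uses three distinct colors, and the interval conditions at its two degree-one vertices force those colors to be three consecutive integers, so $W_{\tau}(K_{1,1})=3$.

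For the upper bound, assume without loss of generality $m\leq n$, let $\alpha$ be an interval total $t$-coloring of $K_{m,n}$, and write $S[v,\alpha]=[a_v,a_v+d_G(v)]$. The key inequality is that for any two vertices $x,y$ lying in the same part and any $z$ in the other part, the colors $\alpha(xz)$ and $\alpha(yz)$ both belong to $S[z,\alpha]$, so $|\alpha(xz)-\alpha(yz)|\leq d_G(z)$. I would apply this in the case where color $1$ sits on a vertex $x$ of the large-degree part $A$ and color $t$ sits on a vertex $y\in A$: then $S[x,\alpha]=[1,n+1]$ and $S[y,\alpha]=[t-n,t]$, and summing the inequality over $z\in B$ while evaluating the two arithmetic progressions of edge-colors at $x$ and $y$ yields
\[
\bigl|(n+1)(t-n-1)+\alpha(x)-\alpha(y)\bigr|\leq mn,
\]
which together with $1\leq\alpha(x)$ and $\alpha(y)\leq t$ forces $t\leq m+n+2$. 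The other placements of the two extreme colors either reduce to the same count (both extremes on edges, or one on an edge and one on a same-part vertex) or make two of the forced intervals $[1,n+1]$, $[1,m+1]$, $[t-n,t]$, $[t-m,t]$ share an edge-color, which immediately gives the sharper bound $t\leq m+n+1$.

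For the lower bound, write the parts as $A=\{u_1,\dots,u_m\}$ and $B=\{v_1,\dots,v_n\}$ with $m\leq n$, and split on $m\geq 2$ versus $m=1$. When $m\geq 2$ I would set $\alpha(u_i)=i$ for $1\leq i\leq m-1$, $\alpha(u_m)=m+n+2$, $\alpha(v_j)=j+m$, $\alpha(u_iv_j)=i+j$ for $i\leq m-1$, and $\alpha(u_mv_j)=m+j+1$; direct inspection gives $S[u_i,\alpha]=[i,i+n]$ for $i<m$, $S[u_m,\alpha]=[m+2,m+n+2]$, $S[v_j,\alpha]=[j+1,j+m+1]$, rules out every incidence or adjacency conflict, and shows that each color in $[1,m+n+2]$ is used. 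When $m=1$ only one vertex of $A$ is available, so both extreme colors must live in $B$; the assignment $\alpha(u_1)=3$, $\alpha(v_1)=1$, $\alpha(v_n)=n+3$, $\alpha(v_j)=j+3$ for $2\leq j\leq n-1$, $\alpha(u_1v_1)=2$, $\alpha(u_1v_n)=n+2$, and $\alpha(u_1v_j)=j+2$ otherwise produces $S[u_1,\alpha]=[2,n+2]$, two-element intervals at every $v_j$, and uses all colors in $[1,n+3]$.

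The main obstacle is the upper-bound case analysis. The signed sum $\sum_{z\in B}(\alpha(yz)-\alpha(xz))$ has no definite sign, because the intervals $[1,n+1]$ and $[t-n,t]$ may still overlap (for $t\leq 2n+1$), so the argument has to go through $|\sum(\cdot)|\leq\sum|\cdot|$ rather than drop the absolute value; and one must carefully enumerate the sub-configurations in which one or both extreme colors lie on edges to verify that the same counting still delivers $t\leq m+n+2$, or is replaced by an intersection-of-intervals argument yielding $t\leq m+n+1$. The $m=1$ lower-bound construction has to be written down on its own because the generic formula degenerates when $A$ has only one vertex.
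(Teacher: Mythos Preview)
The paper does not contain a proof of this theorem; it is quoted from Petrosyan and Torosyan \cite{PetTor} as background and stated without argument, so there is nothing here to compare your proposal against directly.

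On its own merits your argument is essentially correct. The two explicit constructions (for $m\ge 2$, and separately for $m=1$, $n\ge 2$) are valid interval total $(m+n+2)$-colorings, and for the upper bound the identity
\[
\sum_{z\in B}\bigl(\alpha(yz)-\alpha(xz)\bigr)=(n+1)(t-n-1)+\alpha(x)-\alpha(y)
\]
together with $\vert\alpha(yz)-\alpha(xz)\vert\le d_{G}(z)=m$ does force $t\le m+n+2$ whenever $x$ and $y$ can be taken in the same part, while the edge $xy$ forces $[1,n+1]\cap[t-m,t]\neq\emptyset$ and hence $t\le m+n+1$ when they lie in opposite parts. The case analysis you flag as the main obstacle can be streamlined: whenever an extreme color sits on an edge you are free to take \emph{either} endpoint as the corresponding $x$ or $y$, so you can always arrange $x$ and $y$ in the same part unless both extreme colors sit on vertices lying in opposite parts --- and that is precisely the sub-case already handled by the intersection-of-intervals argument.
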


\begin{theorem}
\label{mytheorem5} For any $n\in \mathbf{N}$, we have
\begin{description}
\item[(1)] $K_{n,n}\in {\cal T}$,

\item[(2)] $w_{\tau }(K_{n,n})=\chi^{\prime\prime}(K_{n,n})=n+2$,

\item[(3)] $W_{\tau }(K_{n,n})=\left\{
\begin{tabular}{ll}
$2n+1$, & if $n=1$,\\
$2n+2$, & if $n\geq 2$,
\end{tabular}%
\right.$

\item[(4)] if $w_{\tau }(K_{n,n})\leq t\leq W_{\tau }(K_{n,n})$,
then $K_{n,n}$ has an interval total $t$-coloring.\
\end{description}
\end{theorem}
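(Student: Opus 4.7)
The plan is to establish the four claims in order, with most of the effort concentrated in claim~(4).

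For claims~(1) and~(2), I would exhibit an explicit interval total $(n+2)$-coloring of $K_{n,n}$ with parts $U=\{u_1,\ldots,u_n\}$ and $V=\{v_1,\ldots,v_n\}$: color the edge $u_iv_j$ by $((i+j-2)\bmod n)+2$, color every $u_i$ by $1$, and color every $v_j$ by $n+2$. The edge part is a shifted cyclic Latin square, so at each $u_i$ the incident edge colors are exactly $\{2,\ldots,n+1\}$, giving $S[u_i,\alpha]=[1,n+1]$; symmetrically $S[v_j,\alpha]=[2,n+2]$. This shows $K_{n,n}\in{\cal T}$ and $w_{\tau}(K_{n,n})\leq n+2$, and the matching lower bound $w_{\tau}(K_{n,n})\geq\chi''(K_{n,n})=n+2$ is immediate from Theorem~\ref{mytheorem2}.

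For claim~(3), the case $n=1$ is immediate since $K_{1,1}$ has only three elements to color, forcing $W_{\tau}(K_{1,1})=3$; for $n\geq 2$ the equality $W_{\tau}(K_{n,n})=2n+2$ is the $m=n$ specialization of Theorem~\ref{mytheorem4}.

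Claim~(4) is where the main work lies. The extreme values $t=n+2$ and $t=2n+2$ are already supplied by~(1) and~(3). For each intermediate value $t=n+k+2$ with $1\leq k\leq n-1$, my plan is to lift an interval edge $(n+k)$-coloring of $K_{n,n}$ to a total coloring: take any such $\beta$, shifted so that its color set is $\{2,\ldots,n+k+1\}$, and write $S(u_i,\beta)=[\ell_i,\ell_i+n-1]$ and $S(v_j,\beta)=[m_j,m_j+n-1]$ with $\ell_i,m_j\in[2,k+2]$. Extend $\beta$ to a total coloring $\alpha$ by $\alpha(u_i)=\ell_i-1$ and $\alpha(v_j)=m_j+n$. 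Because the vertex-color ranges $[1,k+1]$ and $[n+2,n+k+2]$ are disjoint (which uses $k\leq n-1$), the total coloring is proper; each $S[u_i,\alpha]=[\ell_i-1,\ell_i+n-1]$ and $S[v_j,\alpha]=[m_j,m_j+n]$ is an interval of length $n+1$; and every color of $[1,n+k+2]$ is used, because some $\ell_i=2$ and some $m_j=k+2$ in any interval edge $(n+k)$-coloring with color set $\{2,\ldots,n+k+1\}$.

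The main obstacle is providing the input interval edge $(n+k)$-coloring of $K_{n,n}$ for every $k\in\{1,\ldots,n-1\}$, i.e., showing that the interval edge-chromatic spectrum of $K_{n,n}$ fills the whole range $\{n,\ldots,2n-1\}$. Once this (known) fact about $K_{n,n}$ is in hand, the lifting above is routine, and combining it with~(1) and~(3) delivers~(4).
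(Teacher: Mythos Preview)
The paper does not actually contain a proof of Theorem~\ref{mytheorem5}; it is quoted from \cite{PetTor} as background. Consequently there is no ``paper's own proof'' to compare against, and your plan must be assessed on its own merits.

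Your arguments for (1), (2), and (3) are correct. The cyclic Latin--square coloring you describe is a valid interval total $(n+2)$-coloring, and invoking Theorem~\ref{mytheorem2} gives the matching lower bound for~(2); for~(3) the appeal to Theorem~\ref{mytheorem4} (also quoted from \cite{PetTor}) is legitimate.

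Your lifting construction in~(4) is also sound: the disjointness of $[1,k+1]$ and $[n+2,n+k+2]$ under $k\leq n-1$ handles the only nontrivial properness check (adjacent $u_i$--$v_j$), the spectra $S[u_i,\alpha]$ and $S[v_j,\alpha]$ are intervals of the right length, and the extremal colors $1$ and $n+k+2$ are indeed realized because some $\ell_i=2$ and some $m_j=k+2$ whenever $\beta$ uses the full range $[2,n+k+1]$.

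The one genuine dependency is the input you flag: that $K_{n,n}$ admits an interval edge $t$-coloring for every $t\in\{n,\ldots,2n-1\}$. This is a known fact (it follows, e.g., from the classical staircase constructions for $K_{n,n}$; see also \cite{Petros3}), but you should either supply the short explicit construction or give a precise citation, since the paper itself does not state this result. With that filled in, your proof of~(4) is complete.
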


In \cite{Petros2}, the first author investigated interval total
colorings of complete graphs and hypercubes, where he proved the
following two theorems:

\begin{theorem}
\label{mytheorem6} For any $n\in \mathbf{N}$, we have
\begin{description}
\item[(1)] $K_{n}\in {\cal T}$,

\item[(2)] $w_{\tau }(K_{n})=\left\{
\begin{tabular}{ll}
$n$, & if $n$ is odd,\\
$\frac{3}{2}n$, & if $n$ is even,
\end{tabular}%
\right.$

\item[(3)] $W_{\tau }(K_{n})=2n-1$.
\end{description}
\end{theorem}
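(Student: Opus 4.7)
The plan is to prove the three parts separately, noting that (1) is immediate from (2) once any interval total coloring of $K_n$ is exhibited.

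For $W_\tau(K_n)\le 2n-1$: given an interval total $t$-coloring $\alpha$, the vertex $u$ with $1\in S[u,\alpha]$ satisfies $S[u,\alpha]=[1,n]$ and a vertex $w$ with $t\in S[w,\alpha]$ satisfies $S[w,\alpha]=[t-n+1,t]$. Since $u,w$ are adjacent in $K_n$, the edge $uw$ is colored by an element of $[1,n]\cap[t-n+1,t]$, forcing $t\le 2n-1$. For the matching lower bound I would exhibit $\alpha(v_i)=2i-1$ for $1\le i\le n$ and $\alpha(v_iv_j)=i+j-1$ for $i<j$; direct verification gives a proper total coloring with $S[v_i,\alpha]=[i,i+n-1]$ in which every color of $[1,2n-1]$ is realized.

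For $w_\tau(K_n)=n$ when $n$ is odd, the lower bound is Theorem~\ref{mytheorem1}. For the upper bound I would use the classical construction $\alpha(v_iv_j)\equiv i+j\pmod n$ and $\alpha(v_i)\equiv 2i\pmod n$ with representatives in $\{1,\ldots,n\}$; oddness of $n$ makes $i\mapsto 2i$ a bijection on $\mathbb{Z}_n$, so vertex colors are distinct and the total-coloring axioms follow from standard checks. Since $|S[v_i,\alpha]|=n$ equals the palette size, each $S[v_i,\alpha]=[1,n]$ is trivially an interval.

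The even case $w_\tau(K_n)=\frac{3n}{2}$ is the main obstacle. For the upper bound I would give an explicit interval total $\frac{3n}{2}$-coloring in which two ``extreme'' vertices $u,w$ have $S[u,\alpha]=[1,n]$ and $S[w,\alpha]=[\frac{n}{2}+1,\frac{3n}{2}]$ while the remaining $n-2$ vertices have intervals starting in $[2,\frac{n}{2}]$; the concrete case $n=4$, $t=6$ with vertex colors $1,2,5,6$ and edges $v_1v_2,v_1v_3,v_1v_4,v_2v_3,v_2v_4,v_3v_4$ colored $4,3,2,5,3,4$ is a template I expect to generalize by direct combinatorial design. For the lower bound $w_\tau(K_n)\ge\frac{3n}{2}$, suppose for contradiction that an interval total $t$-coloring exists with $t\le\frac{3n}{2}-1$, and put $A=t-n+1\le n/2$, so every interval $[a_v,a_v+n-1]$ has $a_v\in[1,A]$. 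For each color $c\in[A+1,n]$ every vertex's interval contains $c$, so the number of vertices having $c$ in their star equals $n$; writing this as $v_c+2e_c=n$ with $v_c\in\{0,1\}$ the count of vertices and $e_c$ the count of edges of color $c$, parity of $n$ forces $v_c=0$. Hence the colors $A+1,\ldots,n$ appear only on edges, so the $n$ pairwise distinct vertex colors of $K_n$ all lie in $[1,A]\cup[n+1,t]$, a set of size $2A-1$; the inequality $n\le 2A-1$ gives $A\ge(n+1)/2$, contradicting $A\le n/2$ for even $n$.
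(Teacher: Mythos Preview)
This theorem is quoted in the paper from \cite{Petros2} and no proof is given here, so there is no in-paper argument to compare against. I therefore assess your proposal on its own merits.

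Your treatment of part~(3) and of part~(2) for odd $n$ is correct and standard: the coloring $\alpha(v_i)=2i-1$, $\alpha(v_iv_j)=i+j-1$ does give $S[v_i,\alpha]=[i,i+n-1]$, and the modular construction for odd $n$ works because $i\mapsto 2i$ is a bijection on $\mathbf{Z}_n$. Your lower bound argument for even $n$ is also correct and quite clean: the parity trick showing that every colour in the ``universal'' range $[A+1,n]$ (in fact $[A,n]$) must be carried only by edges, forcing all $n$ vertex colours into a set of size $2A-1$, is exactly the right idea.

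The genuine gap is the \emph{upper bound} for even $n$. You do not give a construction for general even $n$; you only describe a target shape and offer an $n=4$ template that you ``expect to generalize''. Moreover, the $n=4$ template is wrong: with vertex colours $1,2,5,6$ and edge colours $4,3,2,5,3,4$ for $v_1v_2,v_1v_3,v_1v_4,v_2v_3,v_2v_4,v_3v_4$, the vertex $v_3$ has colour $5$ while the incident edge $v_2v_3$ also has colour $5$, so this is not a proper total colouring; and $S[v_4,\alpha]=\{2,3,4,6\}$ is not an interval. Producing an explicit interval total $\tfrac{3n}{2}$-colouring of $K_n$ for all even $n$ is the nontrivial part of this theorem, and your proposal does not supply it. (One route, consistent with the present paper, is to specialize the construction in the proof of Theorem~\ref{mytheorem10} to $r=n$ parts of size $1$, which yields exactly a $\tfrac{3n}{2}$-colouring of $K_n$.)
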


\begin{theorem}
\label{mytheorem7} For any $n\in \mathbf{N}$, we have
\begin{description}
\item[(1)] $Q_{n}\in {\cal T}$,

\item[(2)] $w_{\tau }(Q_{n})=\chi^{\prime\prime}(Q_{n})=\left\{
\begin{tabular}{ll}
$n+2$, & if $n\leq 2$,\\
$n+1$, & if $n\geq 3$,
\end{tabular}%
\right.$

\item[(3)] $W_{\tau }(Q_{n})\geq \frac{(n+1)(n+2)}{2}$,

\item[(4)] if $w_{\tau }(Q_{n})\leq t\leq \frac{(n+1)(n+2)}{2}$,
then $Q_{n}$ has an interval total $t$-coloring.\
\end{description}
\end{theorem}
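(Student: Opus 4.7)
The plan is to exploit the recursive structure of $Q_{n}$ as two disjoint copies $Q_{n-1}^{0},Q_{n-1}^{1}$ of $Q_{n-1}$ joined by the perfect matching $M=\{v^{0}v^{1}:v\in V(Q_{n-1})\}$, and to prove all four items by induction on $n$.

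For parts (1) and (2), the cases $n=1,2$ follow directly from Theorem \ref{mytheorem5}, since $Q_{1}=K_{1,1}$ and $Q_{2}=K_{2,2}$. For $n\geq 3$, the crucial observation is that $Q_{n}$ is $n$-regular, so in any proper total $(n+1)$-coloring $\alpha$ the set $S[v,\alpha]$ has cardinality $n+1$ at every vertex; since only $n+1$ colors are available, this forces $S[v,\alpha]=\{1,\ldots,n+1\}$, which is automatically an interval. Combined with the classical fact that $\chi^{\prime\prime}(Q_{n})=n+1$ for $n\geq 3$, this yields $w_{\tau}(Q_{n})\leq n+1$, matching the trivial lower bound $w_{\tau}\geq\chi^{\prime\prime}$, and in particular shows $Q_{n}\in{\cal T}$.

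For parts (3) and (4), I would prove by induction on $n\geq 2$ the combined claim that $Q_{n}$ admits an interval total $t$-coloring for every $t\in[w_{\tau}(Q_{n}),(n+1)(n+2)/2]$, with base case $n=2$ supplied by Theorem \ref{mytheorem5}. In the inductive step, let $\alpha$ be an interval total $s$-coloring of $Q_{n-1}$ with $S[v,\alpha]=[a_{v},a_{v}+n-1]$ at every $v$, and consider two constructions on $Q_{n}$. The \emph{identical construction} colors both copies by $\alpha$ and assigns color $a_{v}+n$ to each matching edge $v^{0}v^{1}$; a direct check shows it is a proper total coloring of $Q_{n}$ with $S[v^{i},\cdot]=[a_{v},a_{v}+n]$, hence an interval total $(s+1)$-coloring. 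The \emph{shifted construction} colors copy $0$ by $\alpha$, colors copy $1$ by $\alpha$ with every color shifted up by $n+1$, and again assigns color $a_{v}+n$ to each matching edge; this produces an interval total $(s+n+1)$-coloring.

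Letting $s$ range over $[w_{\tau}(Q_{n-1}),n(n+1)/2]$ by the inductive hypothesis, the identical construction sweeps out $t\in[w_{\tau}(Q_{n-1})+1,n(n+1)/2+1]$ and the shifted construction sweeps out $t\in[w_{\tau}(Q_{n-1})+n+1,(n+1)(n+2)/2]$. For $n\geq 4$ one has $w_{\tau}(Q_{n-1})=n$, and the elementary inequality $n(n+1)/2+1\geq 2n+1$ shows the two ranges together equal $[n+1,(n+1)(n+2)/2]=[w_{\tau}(Q_{n}),(n+1)(n+2)/2]$. The only missed value is $t=4=w_{\tau}(Q_{3})$ at $n=3$, which is filled directly by any proper total $4$-coloring of $Q_{3}$; part (3) is then the $t=(n+1)(n+2)/2$ instance. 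The main obstacle is the verification that the matching color $a_{v}+n$ is indeed the value forced at every vertex---adding one element to $[a_{v},a_{v}+n-1]$ and to its $(n+1)$-shift must simultaneously yield length-$(n+1)$ intervals---and that among uniform shifts only $k=0$ and $k=n+1$ admit such a globally consistent matching rule, which is precisely why these two constructions together realize all spans up to $(n+1)(n+2)/2$ and no wider.
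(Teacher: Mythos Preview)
First, note that the paper does not itself prove Theorem~\ref{mytheorem7}; it quotes it from \cite{Petros2} as background. So there is no in-paper proof to compare against, and I can only assess your argument on its own merits.

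Your treatment of (1) and (2) is fine: for $n\geq 3$ the $n$-regularity of $Q_n$ forces every optimal total coloring to be an interval total coloring, and the small cases are handled by Theorem~\ref{mytheorem5}.

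There is, however, a genuine gap in your inductive step for (3)--(4). In the \emph{identical construction} you color both copies $Q_{n-1}^{0}$ and $Q_{n-1}^{1}$ by the same total coloring $\alpha$. But then the two endpoints $v^{0}$ and $v^{1}$ of each matching edge receive the \emph{same} vertex color $\alpha(v)$, and since $v^{0}v^{1}\in E(Q_n)$ this violates the definition of a total coloring. The ``direct check'' you invoke therefore fails at the very first step. Nor is there an obvious local repair: if you try to recolor $v^{1}$ by the only other value that keeps $S[v^{1},\cdot]$ an interval of length $n+1$ (namely $a_v+n$ or $a_v-1$), you are forced to give the matching edge the color $\alpha(v)$, which then coincides with the color of $v^{0}$; and recoloring the vertices in \emph{both} copies by $a_v\pm n$ creates conflicts between adjacent vertices whenever two neighbors share the same $a_v$, which certainly happens when $s=w_\tau(Q_{n-1})$.

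With the identical construction gone, your induction only manufactures spans in $[w_\tau(Q_{n-1})+n+1,(n+1)(n+2)/2]$, i.e.\ $[2n+1,(n+1)(n+2)/2]$ for $n\ge 4$, leaving the entire block $[n+2,2n]$ uncovered. You need a second construction that genuinely produces small spans from small spans; one workable route is to pass through interval \emph{edge} colorings of $Q_n$ (as the present paper does in the converse direction in Theorem~\ref{mytheorem12}) rather than trying to lift total colorings copy-by-copy.
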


Later, the first author and Torosyan \cite{PetTor} showed that if
$w_{\tau }(K_{n})\leq t\leq W_{\tau }(K_{n})$, then the complete
graph $K_{n}$ has an interval total $t$-coloring. In
\cite{PetShash1}, the first author and Shashikyan proved that trees
have an interval total coloring. In
\cite{PetShash2,PetShash3,PetShashTor}, they investigated interval
total colorings of bipartite graphs. In particular, they proved that
regular bipartite graphs, subcubic bipartite graphs, doubly convex
bipartite graphs, $\left(2,b \right)$-biregular bipartite graphs and
some classes of bipartite graphs with maximum degree $4$ have
interval total colorings. They also showed that there are bipartite
graphs that have no interval total coloring. The smallest known
bipartite graph with 26 vertices and maximum degree 18 that is not
interval total colorable was obtained by Shashikyan \cite{Shash}.

One of the less-investigated problems related to interval total
colorings is a problem of determining the exact values of the
minimum and the maximum span in interval total colorings of graphs.
The exact values of these parameters are known only for paths,
cycles, trees \cite{PetShash1,PetShash2}, wheels \cite{PetKhach2},
complete and complete balanced bipartite graphs
\cite{Petros1,Petros2,PetTor}. In some papers
\cite{PetKhach1,PetKhach3,PetKhach4,PetShashTorKhach} lower and
upper bounds are found for the minimum and the maximum span in
interval total colorings of certain graphs.

In this paper we prove that all complete balanced multipartite
graphs are interval total colorable and we derive some bounds for
the minimum and the maximum span in interval total colorings of
these graphs. Next, we investigate interval total colorings of
hypercubes $Q_{n}$ and we show that $W_{\tau
}(Q_{n})=\frac{(n+1)(n+2)}{2}$ for any $n\in \mathbf{N}$.
\bigskip

\section{Interval total colorings of complete multipartite graphs}\

We first consider interval total colorings of complete bipartite
graphs. It is known that $K_{m,n}\in {\cal T}$ and $w_{\tau
}(K_{m,n})\leq m+n+2-\gcd(m,n)$ for any $m,n\in \mathbf{N}$, but in
general the exact value of $w_{\tau }(K_{m,n})$ is unknown. Our
first result generalizes the point (2) of Theorem \ref{mytheorem5}.

\begin{theorem}
\label{mytheorem8} For any $n,l\in \mathbf{N}$, we have $w_{\tau
}(K_{n,n\cdot l})=\chi^{\prime\prime}(K_{n,n\cdot l})$.
\end{theorem}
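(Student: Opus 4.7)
The plan is to establish $w_\tau(K_{n,nl}) \leq \chi''(K_{n,nl})$, since the reverse inequality always holds in $\mathcal{T}$. When $l=1$ this is point (2) of Theorem \ref{mytheorem5}, so I treat the case $l \geq 2$, for which Theorem \ref{mytheorem2} gives $\chi''(K_{n,nl}) = nl+1$. The task is to construct an interval total $(nl+1)$-coloring $\alpha$.

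Write $V_1 = \{u_1,\ldots,u_n\}$ and $V_2 = \{v_1,\ldots,v_{nl}\}$. Because the only interval of length $nl+1$ inside $[1,nl+1]$ is the whole interval, every $u_i$ must satisfy $S[u_i,\alpha] = [1,nl+1]$; I set $\alpha(u_i) = i$, so that the edges at $u_i$ carry exactly the colors $[1,nl+1]\setminus\{i\}$. Each $v_j$ has a spectrum $[s_j, s_j+n]$, and since $v_j$ is adjacent to every $u_i$ we have $\alpha(v_j) \in [n+1, nl+1]$. The construction partitions $V_2$ by $s_j$ into: a block $B_0$ of $n-1$ vertices with $s_j = 1$, blocks $B_k$ of $n$ vertices with $s_j = kn+1$ for $k = 1,\ldots,l-2$, and a block $B_{l-1}$ of $n+1$ vertices with $s_j = (l-1)n+1$. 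These sizes are natural: color $1$ appears on exactly $n-1$ edges (one per $u_i$ with $i \ne 1$), and those edges can only live at vertices whose spectrum contains $1$, forcing $|B_0| = n-1$; choosing $|B_k| = n$ in the middle leaves the correct total $|B_{l-1}| = nl - (n-1) - n(l-2) = n+1$.

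For the edge and vertex coloring within the blocks: in $B_0$ put $\alpha(v) = n+1$ and use the cyclic rectangle $\alpha(u_iv_j) = ((i+j-1)\bmod n)+1$ with $j \in [1,n-1]$, so that every column is a permutation of $[1,n]$ and row $i$ covers $[1,n]\setminus\{i\}$; in each $B_k$ with $1 \leq k \leq l-2$ put $\alpha(v) = (k+1)n+1$ and color the edges by a Latin square on $[kn+1,(k+1)n]$; in $B_{l-1}$ take any proper edge coloring of $K_{n,n+1}$ with color palette $[(l-1)n+1, nl+1]$ (e.g.\ the cyclic coloring) and set $\alpha(v_j)$ to the color of that interval missing from column $j$. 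Combining the blocks, row $i$ acquires $([1,n]\setminus\{i\}) \cup [n+1,(l-1)n] \cup [(l-1)n+1,nl+1] = [1,nl+1]\setminus\{i\}$, as required, and each $v_j \in B_k$ has $S[v_j,\alpha] = [s_j, s_j+n]$.

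Properness of $\alpha$ follows because the vertex colors on the two sides are disjoint ($i \leq n$ versus $\alpha(v_j) \geq n+1$), the edge palettes of distinct blocks are pairwise disjoint subsets of $[1,nl+1]$ so edges at a common $u_i$ have distinct colors, and within each block the sub-bipartite graph is already properly edge-colored by construction. The conceptual hurdle is discovering the block sizes and the accompanying distribution of boundary colors; once these are fixed, the verification is routine, and the degenerate cases $l=2$ (no middle blocks) and $n=1$ (block $B_0$ empty) fit into the same construction without special treatment.
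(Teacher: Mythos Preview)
Your construction is correct, but it diverges from the paper's in one key design choice that makes yours more intricate than necessary. You assign $\alpha(u_i)=i$, which forces the edges at $u_i$ to realize $[1,nl+1]\setminus\{i\}$; this asymmetry is what drives your uneven block sizes $n-1,\,n,\ldots,n,\,n+1$ and the need for three different edge-coloring gadgets (a Latin rectangle on $B_0$, Latin squares on the middle blocks, and a proper $(n+1)$-edge-coloring of $K_{n,n+1}$ on $B_{l-1}$). The paper instead sets $\beta(u_i)=nl+1$ for \emph{every} $i$ (legal since the $u_i$ are pairwise non-adjacent), so the edge set at each $u_i$ must realize exactly $[1,nl]$. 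This allows $V_2$ to be split into $l$ equal blocks of size $n$, with a single $n\times n$ Latin square shifted by $n\cdot k$ on block $k$; the vertex color $\beta(v_j)$ is then $n+1$ on the first block and $n\cdot\lfloor(j-1)/n\rfloor$ on the rest, extending each spectrum to an interval on the appropriate side.

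So both arguments prove the same bound by explicit construction; the paper's gains uniformity and a shorter verification, while your version shows that the choice of vertex colors on the small side is flexible, at the cost of a busier bookkeeping. Your handling of the edge cases $l=2$ and $n=1$ is fine and needs no separate treatment, as you note.
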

\begin{proof} First of all let us consider the case $l=1$. By
Theorem \ref{mytheorem5}, we obtain $w_{\tau
}(K_{n,n})=\chi^{\prime\prime}(K_{n,n})=n+2$ for any $n\in
\mathbf{N}$.

Now we assume that $l\geq 2$.

Let $V\left(K_{n,n\cdot
l}\right)=\{u_{1},\ldots,u_{n},v_{1},\ldots,v_{n\cdot l}\}$ and
$E\left(K_{n,n\cdot l}\right)=\{u_{i}v_{j}|~1\leq i\leq n, 1\leq
j\leq n\cdot l\}$. Also, let $G=K_{n,n\cdot
l}\left[\{u_{1},\ldots,u_{n},v_{1},\ldots,v_{n}\}\right]$. Clearly,
$G$ is isomorphic to the graph $K_{n,n}$.

Now we define an edge-coloring $\alpha$ of $G$ as follows: for
$1\leq i\leq n$ and $1\leq j\leq n$, let
\begin{center}
$\alpha\left(u_{i}v_{j}\right)=\left\{
\begin{tabular}{ll}
$i+j-1\pmod{n}$, & if $i+j\neq n+1$,\\
$n$, & if $i+j=n+1$.
\end{tabular}%
\right.$
\end{center}

It is easy to see that $\alpha$ is a proper edge-coloring of $G$ and
$S(u_{i},\alpha)=S(v_{i},\alpha)=[1,n]$ for $1\leq i\leq n$.

Next we construct an interval total $(n\cdot l+1)$-coloring of
$K_{n,n\cdot l}$. Before we give the explicit definition of the
coloring, we need two auxiliary functions. For $i\in \mathbf{N}$, we
define a function $f_{1}(i)$ as follows: $f_{1}(i)=1+(i-1)\pmod{n}$.
For $j\in \mathbf{N}$, we define a function $f_{2}(j)$ as follows:
$f_{2}(j)=\left\lfloor\frac{j-1}{n}\right\rfloor$.

Now we able to define a total coloring $\beta$ of $K_{n,n\cdot l}$.

For $1\leq i\leq n$, let
\begin{center}
$\beta(u_{i})=n\cdot l+1$.
\end{center}

For $1\leq j\leq n\cdot l$, let
\begin{center}
$\beta(v_{j})=\left\{
\begin{tabular}{ll}
$n+1$, & if $1\leq j\leq n$,\\
$n\cdot f_{2}(j)$, & if $n+1\leq j\leq n\cdot l$.
\end{tabular}%
\right.$
\end{center}

For $1\leq i\leq n$ and $1\leq j\leq n\cdot l$, let
\begin{center}
$\beta(u_{i}v_{j})=\alpha\left(u_{f_{1}(i)}v_{f_{1}(j)}\right)+n\cdot
f_{2}(j)$.
\end{center}

Let us prove that $\beta$ is an interval total $(n\cdot
l+1)$-coloring of $K_{n,n\cdot l}$.

By the definition of $\beta$ and taking into account that
$S(u_{i},\alpha)=S(v_{i},\alpha)=[1,n]$ for $1\leq i\leq n$, we have

\begin{center}
$S[u_{i},\beta]=S(u_{i},\beta)\cup
\{\beta(u_{i})\}=\left(\bigcup_{k=1}^{l}S\left(u_{f_{1}(i)},\alpha\right)\oplus
n(k-1)\right)\cup \{\beta(u_{i})\}=$\\
$=[1,n\cdot l]\cup \{n\cdot l+1\}=[1,n\cdot l+1]$ for $1\leq i\leq n$,\\
$S[v_{j},\beta]=S(v_{j},\alpha)\cup
\{\beta(v_{j})\}=[1,n]\cup\{n+1\}=[1,n+1]$ for $1\leq j\leq n$, and\\
$S[v_{j},\beta]=S(v_{j},\beta)\cup \{\beta(v_{j})\}=
\left(S\left(v_{f_{1}(j)},\alpha\right)\oplus n\cdot
f_{2}(j)\right)\cup \{\beta(v_{j})\}=$\\
$=[1+n\cdot f_{2}(j),n+n\cdot f_{2}(j)]\cup \{n\cdot
f_{2}(j)\}=[n\cdot f_{2}(j),n+n\cdot f_{2}(j)]$ for $n+1\leq j\leq
n\cdot l$.
\end{center}

This shows that $\beta$ is an interval total $(n\cdot l+1)$-coloring
of $K_{n,n\cdot l}$. Thus, $w_{\tau }(K_{n,n\cdot l})\leq n\cdot
l+1$. On the other hand, by Theorem \ref{mytheorem2}, $w_{\tau
}(K_{n,n\cdot l})\geq \chi^{\prime\prime}(K_{n,n\cdot l})=n\cdot
l+1$ for $l\geq 2$ and hence $w_{\tau }(K_{n,n\cdot
l})=\chi^{\prime\prime}(K_{n,n\cdot l})$. $~\square$
\end{proof}

Next, we show that the difference between $w_{\tau }(K_{m,n})$ and
$\chi^{\prime\prime}(K_{m,n})$ for some $m$ and $n$ can be arbitrary
large.

\begin{theorem}
\label{mytheorem9} For any $l\in \mathbf{N}$, there exists a graph
$G$ such that $G\in {\cal T}$ and $w_{\tau
}(G)-\chi^{\prime\prime}(G)\geq l$.
\end{theorem}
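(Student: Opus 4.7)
The plan is to read the construction straight out of the results on complete graphs already recorded in the paper. Take $G=K_{2(l+1)}$, the complete graph on the even number $2l+2$ of vertices. Theorem~\ref{mytheorem6}(1) already guarantees $G\in{\cal T}$, so I only have to verify the numerical gap.

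Since $2(l+1)$ is even, Theorem~\ref{mytheorem1} yields
\begin{equation*}
\chi^{\prime\prime}(G)=2(l+1)+1=2l+3,
\end{equation*}
while Theorem~\ref{mytheorem6}(2) yields
\begin{equation*}
w_{\tau}(G)=\tfrac{3}{2}\cdot 2(l+1)=3l+3.
\end{equation*}
Subtracting gives $w_{\tau}(G)-\chi^{\prime\prime}(G)=(3l+3)-(2l+3)=l$, which is the required bound. In fact, the same calculation shows that for every even $n\geq 2$ the gap $w_{\tau}(K_{n})-\chi^{\prime\prime}(K_{n})$ equals $\tfrac{n}{2}-1$, so the family $\{K_{2k}\}_{k\geq 2}$ witnesses an unbounded difference between $w_{\tau}$ and $\chi^{\prime\prime}$.

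There is essentially no obstacle to overcome, since all the work was already done in Theorems~\ref{mytheorem1} and~\ref{mytheorem6}. The only observation one needs is that the two quantities $\chi^{\prime\prime}(K_{n})$ and $w_{\tau}(K_{n})$ depend on the parity of $n$ in very different ways: passing from odd $n-1$ to even $n$ shifts $\chi^{\prime\prime}$ by just one unit but forces $w_{\tau}$ to jump by roughly $n/2$, and this disparity is precisely what produces the linearly growing defect. No auxiliary lemma or explicit coloring has to be constructed; a single substitution finishes the proof.
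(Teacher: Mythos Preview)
Your proof is correct for the theorem as stated: with $G=K_{2(l+1)}$, Theorems~\ref{mytheorem1} and~\ref{mytheorem6} give $\chi''(G)=2l+3$ and $w_{\tau}(G)=3l+3$, hence the gap is exactly $l$, and no further work is needed.

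However, the paper takes a genuinely different route. It works inside the class of complete \emph{bipartite} graphs, choosing $G=K_{n+l,2n}$ with $n=l+3$, and proves from scratch the lower bound $w_{\tau}(K_{n+l,2n})\geq 2n+1+l$ by a short counting argument (in any hypothetical interval total $t$-coloring with $t\leq 2n+l$, every vertex sees every color in a central window $[n,n+l+1]$, forcing too many vertices of the larger side to receive each such color). The reason for this choice is contextual: the theorem is placed immediately after Theorem~\ref{mytheorem8} and is announced as showing that $w_{\tau}(K_{m,n})-\chi''(K_{m,n})$ can be arbitrarily large, so the paper really wants bipartite witnesses. Your argument is shorter and entirely self-contained given the cited results, but it does not deliver this extra information; the paper's argument is slightly longer yet establishes the stronger fact that the gap is unbounded already among complete bipartite graphs, which is precisely what the surrounding discussion needs.
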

\begin{proof} Let $n=l+3$. Clearly, $n\geq 4$. Consider the complete
bipartite graph $K_{n+l,2n}$ with bipartition $(X,Y)$, where $\vert
X\vert =n+l$ and $\vert Y\vert =2n$. By Theorem \ref{mytheorem2}, we
have $\chi^{\prime\prime}(K_{n+l,2n})=2n+1$.

By Theorem \ref{mytheorem4}, we have $K_{n+l,2n}\in {\cal T}$. We
now show that $w_{\tau }(K_{n+l,2n})\geq 2n+1+l$.

Suppose, to the contrary, that $K_{n+l,2n}$ has an interval total
$t$-coloring $\alpha$, where $2n+1\leq t\leq 2n+l$.

Let us consider $S[v,\alpha]$ for any $v\in V(K_{n+l,2n})$. It is
easy to see that $[t-n-l,n+l+1]\subseteq S[v,\alpha]$ for any $v\in
V(K_{n+l,2n})$. Since $t\leq 2n+l$, we have $[n,n+l+1]\subseteq
S[v,\alpha]$ for any $v\in V(K_{n+l,2n})$. This implies that for
each $c\in [n,n+l+1]$, there are $n-l$ vertices in $Y$ colored by
$c$. On the other hand, since $\vert Y\vert=2n$, we have $2n\geq
(l+2)(n-l)$, which contradicts the equality $n=l+3$.

This shows that $w_{\tau }(K_{n+l,2n})\geq 2n+1+l$. We take
$G=K_{n+l,2n}$. Hence, $w_{\tau }(G)-\chi^{\prime\prime}(G)\geq l$.
$~\square$
\end{proof}

Now we consider interval total colorings of complete $r$-partite
graphs with $n$ vertices in each part.

\begin{theorem}
\label{mytheorem10} If $r=2$ or $r$ is even and $n$ is odd, then
$K_{n,\ldots,n}\in {\cal T}$ and
\begin{center}
$w_{\tau }(K_{n,\ldots,n})\leq \left(\frac{3}{2}r-2\right)n+2$.
\end{center}
\end{theorem}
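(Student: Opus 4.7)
The case $r=2$ is immediate from Theorem~\ref{mytheorem5}(2): the bound $\left(\frac{3}{2}\cdot 2-2\right)n+2=n+2$ equals $w_{\tau}(K_{n,n})$, so $K_{n,n}\in{\cal T}$ and the claim holds.

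For $r=2s$ with $s\geq 2$ and $n$ odd, my plan is to construct an interval total $((3s-2)n+2)$-coloring $\beta$ of $K_{n,\ldots,n}$ via a ``blow-up'' of a suitable auxiliary edge coloring of $K_{2s}$. Label the parts $V_1,\ldots,V_{2s}$ and pair them as $(V_{2k-1},V_{2k})$ for $k=1,\ldots,s$.

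First, I would produce an interval edge coloring of $K_{2s}$ with $3s-2$ colors, called levels $1,\ldots,3s-2$, in which both vertices of pair $k$ see the color interval $[k,k+2s-2]$, a window of width $2s-1$. The matching sizes in successive levels follow the pattern $1,2,\ldots,s,s,\ldots,s,\ldots,2,1$, summing to $\binom{2s}{2}$; in each level the matching lies on the ``active'' pairs for that level (those $k$ with $k\leq \ell\leq k+2s-2$), and the matchings across levels are chosen so that every pair of parts is matched at exactly one level. Next, blow up each level $\ell$ into the block $B_\ell=[(\ell-1)n+2,\ell n+1]$ of $n$ consecutive colors. For each pair of parts $(V_i,V_j)$ matched at level $\ell$, color the $n^2$ edges of the $K_{n,n}$ between $V_i$ and $V_j$ by a proper $n$-edge coloring that uses each color of $B_\ell$ at every endpoint (for instance, the canonical coloring of $K_{n,n}$ used in the proof of Theorem~\ref{mytheorem8}, translated into $B_\ell$). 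Then each vertex $v\in V_{2k-1}\cup V_{2k}$ sees on its edges exactly the colors $B_k\cup\cdots\cup B_{k+2s-2}=[(k-1)n+2,(k+2s-2)n+1]$. Finally, assign each vertex of $V_{2k-1}$ the color $(k-1)n+1$ and each vertex of $V_{2k}$ the color $(k+2s-2)n+2$, which extend the edge interval by one color on the appropriate side, yielding $S[v,\beta]$ of length $(2s-1)n+1=\Delta+1$ at every $v$ and using exactly the colors $1,\ldots,(3s-2)n+2$.

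The main technical obstacle is the first step: constructing the meta-level interval edge coloring of $K_{2s}$ with the prescribed vertex windows, choosing matchings so that every pair of parts is covered exactly once, the matching sizes match the pattern, and the window at each vertex of $K_{2s}$ is an interval of length $2s-1$. The parity hypotheses play a role here: $r=2s$ even allows the pair decomposition and makes the windows $[k,k+2s-2]$ symmetric about the middle, and $n$ odd aligns $(3s-2)n+2$ with $\chi''(K_{n,\ldots,n})+(s-1)n$ via Theorem~\ref{mytheorem3}, leaving exactly the right amount of room for the blow-up. Once this meta-coloring is produced and the blow-up is in place, a direct check confirms that $\beta$ is a proper interval total coloring with the claimed number of colors, giving $K_{n,\ldots,n}\in{\cal T}$ and $w_{\tau}(K_{n,\ldots,n})\leq(3s-2)n+2$.
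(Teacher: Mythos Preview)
Your approach is essentially the paper's: both build the coloring as a ``blow-up'' of an interval edge coloring of the macro-graph $K_r$ (vertices $=$ parts), where each macro-edge at level $\ell$ is expanded to a proper $n$-edge-coloring of the corresponding $K_{n,n}$ using the block $[(\ell-1)n+2,\ell n+1]$, and each part receives a single vertex color just outside its edge interval. The paper pairs the parts exactly as you propose (after a relabeling): two parts share the same edge-window and are given vertex colors at opposite ends of it.

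The genuine gap is that you do not construct the meta-coloring. You correctly flag it as ``the main technical obstacle'' and list the properties it must have (windows $[k,k+2s-2]$ for pair $k$, each unordered pair of parts hit at exactly one level), but you never produce it. This is not a result you can cite off the shelf: the existence of an interval $(3s-2)$-edge-coloring of $K_{2s}$ with the \emph{specific} spectrum pattern $[1,2s-1],[1,2s-1],[2,2s],[2,2s],\ldots,[s,3s-2],[s,3s-2]$ is precisely what must be proved. The paper spends the bulk of its argument on an explicit eight-case formula assigning a level to every pair $(i,j)$, followed by a case-by-case verification that each part sees $r-1$ consecutive levels. Without that construction (or an equivalent one), your write-up is a plan, not a proof.

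A minor remark: your sentence that ``$n$ odd aligns $(3s-2)n+2$ with $\chi''+(s-1)n$'' is arithmetic, not an explanation of where the parity hypothesis is used. In the paper the construction itself does not visibly rely on $n$ odd; the hypothesis is there because the complementary case ($\chi''=\Delta+1$) is disposed of trivially immediately after the theorem.
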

\begin{proof} First let us note that the theorem is true for the case $r=2$, since $w_{\tau
}(K_{n,n})=\chi^{\prime\prime}(K_{n,n})=n+2$ for any $n\in
\mathbf{N}$, by Theorem \ref{mytheorem5}.

Now we assume that $r$ is even and $n$ is odd. Clearly, for the
proof of the theorem, it suffices to prove that $K_{n,\ldots,n}$ has
an interval total
$\left(\left(\frac{3}{2}r-2\right)n+2\right)$-coloring.

Let $V(K_{n,\ldots,n})=\left\{v^{(i)}_{j}|~1\leq i\leq r,1\leq j\leq
n\right\}$ and

$E(K_{n,\ldots,n})=\left\{v^{(i)}_{p}v^{(j)}_{q}|~1\leq i<j\leq
r,1\leq p\leq n,1\leq q\leq n\right\}$.\\

Define a total coloring $\alpha$ of $K_{n,\ldots,n}$. First we color
the vertices of the graph as follows:

\begin{center}
$\alpha\left(v^{(1)}_{j}\right)=1$ for $1\leq j\leq n$ and
$\alpha\left(v^{(2)}_{j}\right)=(r-1)n+2$ for $1\leq j\leq n$,\\
$\alpha\left(v^{(i+1)}_{j}\right)=(i-1)n+1$ for $2\leq i\leq
\frac{r}{2}-1$ and $1\leq j\leq n$,\\
$\alpha\left(v^{(\frac{r}{2}+i-1)}_{j}\right)=(r+i-2)n+2$ for $2\leq
i\leq \frac{r}{2}-1$ and $1\leq j\leq n$,\\
$\alpha\left(v^{(r-1)}_{j}\right)=\left(\frac{r}{2}-1\right)n+1$ for
$1\leq j\leq n$ and
$\alpha\left(v^{(r)}_{j}\right)=\left(\frac{3}{2}r-2\right)n+2$ for
$1\leq j\leq n$.
\end{center}

Next we color the edges of the graph. For each edge
$v^{(i)}_{p}v^{(j)}_{q}\in E(K_{n,\ldots,n})$ with $1\leq i<j\leq r$
and $p=1,\ldots,n$, $q=1,\ldots,n$, define a
color $\alpha\left(v^{(i)}_{p}v^{(j)}_{q}\right)$ as follows:\\

for $i=1,\ldots,\left\lfloor\frac{r}{4}\right\rfloor$, $
j=2,\ldots,\frac{r}{2}$, $i+j\leq \frac{r}{2}+1$, let

\begin{center}
$\alpha \left(v_{p}^{(i)}v_{q}^{(j)}\right)=\left(
i+j-3\right)n+\left\{
\begin{tabular}{ll}
$1+(p+q-1)\pmod{n}$, & if $p+q\neq n+1$,\\
$n+1$, & if $p+q=n+1$;
\end{tabular}%
\right.$
\end{center}

for $i=2,\ldots,\frac{r}{2}-1$,
$j=\left\lfloor\frac{r}{4}\right\rfloor+2,\ldots,\frac{r}{2}$,
$i+j\geq \frac{r}{2}+2$, let

\begin{center}
$\alpha \left(v_{p}^{(i)}v_{q}^{(j)}\right)=\left(
i+j+\frac{r}{2}-4\right)n+\left\{
\begin{tabular}{ll}
$1+(p+q-1)\pmod{n}$, & if $p+q\neq n+1$,\\
$n+1$, & if $p+q=n+1$;
\end{tabular}%
\right.$
\end{center}

for $i=3,\ldots,\frac{r}{2}$, $j=\frac{r}{2}+1,\dots,r-2$, $j-i\leq
\frac{r}{2}-2$, let

\begin{center}
$\alpha \left(v_{p}^{(i)}v_{q}^{(j)}\right)=\left(
\frac{r}{2}+j-i-1\right)n+\left\{
\begin{tabular}{ll}
$1+(p+q-1)\pmod{n}$, & if $p+q\neq n+1$,\\
$n+1$, & if $p+q=n+1$;
\end{tabular}%
\right.$
\end{center}

for $i=1,\ldots,\frac{r}{2}$, $j=\frac{r}{2}+1,\ldots,r$, $j-i\geq
\frac{r}{2}$, let

\begin{center}
$\alpha
\left(v_{p}^{(i)}v_{q}^{(j)}\right)=\left(j-i-1\right)n+\left\{
\begin{tabular}{ll}
$1+(p+q-1)\pmod{n}$, & if $p+q\neq n+1$,\\
$n+1$, & if $p+q=n+1$;
\end{tabular}%
\right.$
\end{center}

for $i=2,\ldots,1+\left\lfloor \frac{r-2}{4}\right\rfloor$,
$j=\frac{r}{2}+1,\dots,\frac{r}{2}+\left\lfloor\frac{r-2}{4}\right\rfloor
$, $j-i=\frac{r}{2}-1$, let

\begin{center}
$\alpha
\left(v_{p}^{(i)}v_{q}^{(j)}\right)=\left(2i-3\right)n+\left\{
\begin{tabular}{ll}
$1+(p+q-1)\pmod{n}$, & if $p+q\neq n+1$,\\
$n+1$, & if $p+q=n+1$;
\end{tabular}%
\right.$
\end{center}

for $i=\left\lfloor
\frac{r-2}{4}\right\rfloor+2,\ldots,\frac{r}{2}$,
$j=\frac{r}{2}+1+\left\lfloor\frac{r-2}{4}\right\rfloor,\ldots,r-1$,
$j-i=\frac{r}{2}-1$, let

\begin{center}
$\alpha
\left(v_{p}^{(i)}v_{q}^{(j)}\right)=\left(i+j-3\right)n+\left\{
\begin{tabular}{ll}
$1+(p+q-1)\pmod{n}$, & if $p+q\neq n+1$,\\
$n+1$, & if $p+q=n+1$;
\end{tabular}%
\right.$
\end{center}

for $i=\frac{r}{2}+1,\dots,\frac{r}{2}+\left\lfloor
\frac{r}{4}\right\rfloor-1$, $j=\frac{r}{2}+2,\ldots,r-2$, $i+j\leq
\frac{3}{2}r-1$, let

\begin{center}
$\alpha
\left(v_{p}^{(i)}v_{q}^{(j)}\right)=\left(i+j-r-1\right)n+\left\{
\begin{tabular}{ll}
$1+(p+q-1)\pmod{n}$, & if $p+q\neq n+1$,\\
$n+1$, & if $p+q=n+1$;
\end{tabular}%
\right.$
\end{center}

for $i=\frac{r}{2}+1,\ldots,r-1$, $j=\frac{r}{2}+\left\lfloor
\frac{r}{4}\right\rfloor +1,\dots,r$, $i+j\geq \frac{3}{2}r$, let

\begin{center}
$\alpha \left(v_{p}^{(i)}v_{q}^{(j)}\right)=\left(
i+j-\frac{r}{2}-2\right)n+\left\{
\begin{tabular}{ll}
$1+(p+q-1)\pmod{n}$, & if $p+q\neq n+1$,\\
$n+1$, & if $p+q=n+1$.
\end{tabular}%
\right.$
\end{center}

Let us prove that $\alpha$ is an interval total
$\left(\left(\frac{3}{2}r-2\right)n+2\right)$-coloring of
$K_{n,\ldots,n}$.

First we show that for each $c\in
\left[1,\left(\frac{3}{2}r-2\right)n+2\right]$, there is $ve\in
VE(K_{n,\ldots,n})$ with $\alpha(ve)=c$.

Consider the vertices $v_{1}^{(1)}$ and $v_{1}^{(r)}$. By the
definition of $\alpha$, we have

\begin{center}
$S\left[v_{1}^{(1)},\alpha\right]
=S\left(v_{1}^{(1)},\alpha\right)\cup
\left\{\alpha\left(v_{1}^{(1)}\right)\right\}=\left(
\bigcup_{l=1}^{r-1}\left([2,n+1]\oplus (l-1)n\right)\right)\cup \{1\}=$\\
$=[2,(r-1)n+1]\cup \{1\}=[1,(r-1)n+1]$ and\\
$S\left[v_{1}^{(r)},\alpha\right]
=S\left(v_{1}^{(r)},\alpha\right)\cup
\left\{\alpha\left(v_{1}^{(r)}\right)\right\}=\left(
\bigcup_{l=\frac{r}{2}}^{\frac{3}{2}r-2}\left([2,n+1]\oplus
(l-1)n\right)\right)\cup \left\{\left(\frac{3}{2}r-2\right)n+2\right\}=$\\
$=\left[\left(\frac{r}{2}-1\right)n+2,\left(\frac{3}{2}r-2\right)n+1\right]\cup
\left\{\left(\frac{3}{2}r-2\right)n+2\right\}=\left[\left(\frac{r}{2}-1\right)n+2,\left(\frac{3}{2}r-2\right)n+2\right]$.
\end{center}

It is straightforward to check that
$S\left[v_{1}^{(1)},\alpha\right]\cup
S\left[v_{1}^{(r)},\alpha\right]=\left[1,\left(\frac{3}{2}r-2\right)n+2\right]$,
so for each $c\in \left[1,\left(\frac{3}{2}r-2\right)n+2\right]$,
there is $ve\in VE(K_{n,\ldots,n})$ with $\alpha(ve)=c$.

Next we show that the edges incident to each vertex of
$K_{n,\ldots,n}$ together with this vertex are colored by $(r-1)n+1$
consecutive colors.

Let $v_{j}^{(i)}\in V(K_{n,\ldots,n})$, where $1\leq i\leq r$,
$1\leq j\leq n$.

Case 1. $1\leq i\leq 2$, $1\leq j\leq n$.

By the definition of $\alpha$, we have

\begin{center}
$S\left[v_{j}^{(1)},\alpha\right]=S\left(v_{j}^{(1)},\alpha\right)\cup
\left\{\alpha\left(v_{j}^{(1)}\right)\right\}=\left(
\bigcup_{l=1}^{r-1}\left([2,n+1]\oplus (l-1)n\right)\right)\cup \{1\}=$\\
$=[2,(r-1)n+1]\cup \{1\}=[1,(r-1)n+1]$ and\\
$S\left[v_{j}^{(2)},\alpha\right]=S\left(v_{j}^{(2)},\alpha\right)\cup
\left\{\alpha\left(v_{j}^{(2)}\right)\right\}=\left(
\bigcup_{l=1}^{r-1}\left([2,n+1]\oplus (l-1)n\right)\right)\cup \{(r-1)n+2\}=$\\
$=[2,(r-1)n+1]\cup \{(r-1)n+2\}=[2,(r-1)n+2]$.
\end{center}

Case 2. $3\leq i\leq \frac{r}{2}$, $1\leq j\leq n$.

By the definition of $\alpha$, we have

\begin{center}
$S\left[v_{j}^{(i)},\alpha\right]=S\left(v_{j}^{(i)},\alpha\right)\cup
\left\{\alpha\left(v_{j}^{(i)}\right)\right\}
=\left(\bigcup_{l=i-1}^{r-3+i}\left([2,n+1]\oplus
(l-1)n\right)\right)\cup
\{(i-2)n+1\}=$\\
$=\left[(i-2)n+2,(r-3+i)n+1\right]\cup
\{(i-2)n+1\}=[(i-2)n+1,(r-3+i)n+1]$.
\end{center}

Case 3. $\frac{r}{2}+1\leq i\leq r-2$, $1\leq j\leq n$.

By the definition of $\alpha$, we have

\begin{center}
$S\left[v_{j}^{(i)},\alpha\right]=S\left(v_{j}^{(i)},\alpha\right)\cup
\left\{\alpha\left(v_{j}^{(i)}\right)\right\}
=\left(\bigcup_{l=i-\frac{r}{2}+1}^{\frac{r}{2}-1+i}\left([2,n+1]\oplus
(l-1)n\right)\right)\cup
\left\{\left(\frac{r}{2}+i-1\right)n+2\right\}=
\left[\left(i-\frac{r}{2}\right)n+2,\left(\frac{r}{2}+i-1\right)n+1\right]\cup
\left\{\left(\frac{r}{2}+i-1\right)n+2\right\}=\left[\left(i-\frac{r}{2}\right)n+2,\left(\frac{r}{2}+i-1\right)n+2\right]$.
\end{center}

Case 4. $r-1\leq i\leq r,1\leq j\leq n$.

By the definition of $\alpha$, we have

\begin{center}
$S\left[v_{j}^{(r-1)},\alpha\right]=S\left(v_{j}^{(r-1)},\alpha\right)\cup
\left\{\alpha\left(v_{j}^{(r-1)}\right)\right\}=\left(\bigcup_{l=\frac{r}{2}}^{\frac{3}{2}r-2}\left([2,n+1]\oplus
(l-1)n\right)\right)\cup
\left\{\left(\frac{r}{2}-1\right)n+1\right\}=\left[\left(\frac{r}{2}-1\right)n+2,\left(\frac{3}{2}r-2\right)n+1\right]\cup
\left\{\left(\frac{r}{2}-1\right)n+1\right\}=\left[\left(\frac{r}{2}-1\right)n+1,\left(\frac{3}{2}r-2\right)n+1\right]$
and\\
$S\left[v_{j}^{(r)},\alpha\right]=S\left(v_{j}^{(r)},\alpha\right)\cup
\left\{\alpha\left(v_{j}^{(r)}\right)\right\}=\left(\bigcup_{l=\frac{r}{2}}^{\frac{3}{2}r-2}\left([2,n+1]\oplus
(l-1)n\right)\right)\cup
\left\{\left(\frac{3}{2}r-2\right)n+2\right\}=\left[\left(\frac{r}{2}-1\right)n+2,\left(\frac{3}{2}r-2\right)n+1\right]\cup
\left\{\left(\frac{3}{2}r-2\right)n+2\right\}=\left[\left(\frac{r}{2}-1\right)n+2,\left(\frac{3}{2}r-2\right)n+2\right]$.
\end{center}

This shows that $\alpha$ is an interval total
$\left(\left(\frac{3}{2}r-2\right)n+2\right)$-coloring of
$K_{n,\ldots,n}$; thus $K_{n,\ldots,n}\in {\cal T}$ and $w_{\tau
}(K_{n,\ldots,n})\leq \left(\frac{3}{2}r-2\right)n+2$. $~\square$
\end{proof}

Note that the upper bound in Theorem \ref{mytheorem10} is sharp when
$r=2$ or $n=1$. Also, by Theorem \ref{mytheorem10}, we have that if
$r=2$ or $r$ is even and $n$ is odd, then $K_{n,\ldots,n}\in {\cal
T}$; otherwise, by Theorem \ref{mytheorem3} and taking into account
that $K_{n,\ldots,n}$ is an $(r-1)n$-regular graph, we have
$K_{n,\ldots,n}\in {\cal T}$ and $w_{\tau
}(K_{n,\ldots,n})=\chi^{\prime\prime}(K_{n,\ldots,n})=(r-1)n+1$. Our
next result gives a lower bound for $W_{\tau }(K_{n,\ldots,n})$ when
$n\cdot r$ is even.

\begin{theorem}
\label{mytheorem11} If $r\geq 2,n\in \mathbf{N}$ and $n\cdot r$ is
even, then $W_{\tau }(K_{n,\ldots,n})\geq
\left(\frac{3}{2}r-1\right)n+1$.
\end{theorem}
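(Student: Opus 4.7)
When $r=2$, the conclusion is immediate from point~(3) of Theorem~\ref{mytheorem5}: we have $W_{\tau}(K_{n,n})\geq 2n+1=(\frac{3}{2}r-1)n+1$ in both the cases $n=1$ and $n\geq 2$. Hence it remains to treat $r\geq 3$ with $nr$ even.

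For $r\geq 3$, the plan is to exhibit an explicit interval total coloring $\alpha$ of $K_{n,\ldots,n}$ using exactly the colors $1,2,\ldots,(\frac{3}{2}r-1)n+1$. Since $K_{n,\ldots,n}$ is $(r-1)n$-regular, every set $S[v,\alpha]$ has length $(r-1)n+1$, so the starting points of these intervals must be spread over $[1,(\frac{r}{2})n+1]$ in order to attain the claimed span. The construction follows the spirit of Theorem~\ref{mytheorem10}: one orders the parts $V_1,\ldots,V_r$ and associates to each part a ``level offset'' $\ell_i$ with $\ell_1=0$, $\ell_r=(\frac{r}{2})n$, the intermediate levels being distributed in $n$-sized jumps. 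Each vertex $v_j^{(i)}$ receives a color at the boundary of its intended interval $[\ell_i+1,\,\ell_i+(r-1)n+1]$, and each edge is assigned $\alpha(v_p^{(i)}v_q^{(j)})=b(i,j)+\psi(p,q)$, where $\psi$ is a modular Latin-square-type function on $[1,n]\times[1,n]$ (as in Theorem~\ref{mytheorem10}) and $b(i,j)$ is a block offset chosen so that the color lies inside both endpoint intervals. To gain the extra $n-1$ colors over Theorem~\ref{mytheorem10}, one exploits the freedom to assign different vertex colors within the same part: when $r$ is even, the extremal parts $V_1,V_2$ (respectively $V_{r-1},V_r$) are split so that their vertices cover several consecutive levels; when $r$ is odd (which forces $n$ even), one partitions the vertices of a ``middle'' part into two groups of size $n/2$ placed at distinct levels, which is where the parity hypothesis is used.

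The main obstacle is the consistent choice of the offsets $b(i,j)$. Each edge color must sit simultaneously in the intervals of both endpoints, a stringent constraint when $|\ell_i-\ell_j|$ approaches $(r-1)n+1$; simultaneously, at any fixed vertex the colors on incident edges together with the vertex color must fill its interval without repetition, which is a Latin-square compatibility condition across all $\binom{r}{2}$ bipartite slabs. I anticipate a case analysis parallel to the one in Theorem~\ref{mytheorem10} (with subcases determined by the position of $i+j$ or $j-i$ relative to $\frac{r}{2}$), and the parity distinction ``$r$ even'' versus ``$r$ odd with $n$ even'' will likely require two essentially separate constructions, the odd case being the more delicate because of the asymmetric level assignment. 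Once the coloring is written down, verifying the interval property reduces to checking, in each case, that the union $\bigcup_{l}[2,n+1]\oplus(l-1)n$ over the appropriate range of $l$, together with the vertex color, is an interval of the correct length—computations entirely analogous to Cases~1--4 in the proof of Theorem~\ref{mytheorem10}.
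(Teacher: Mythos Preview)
Your overall strategy---an explicit interval total coloring, with a parity split ``$r$ even'' versus ``$n$ even'', and the extra $n-1$ colors coming from letting vertices inside the same part receive different colors---is exactly the paper's approach. Two details of your plan, however, diverge from the paper and would need correction.

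First, keeping the \emph{modular} edge function $\psi$ of Theorem~\ref{mytheorem10} cannot work. With that $\psi$, every vertex in a given part sees the \emph{same} set of edge colors, so their intervals $S[v,\alpha]$ all start at the same place (up to $\pm 1$ for the vertex color), and you cannot spread the span by $n-1$ inside a part. The paper instead takes $\psi(p,q)=p+q$ (no reduction modulo $n$): then $v^{(i)}_j$ sees, within each bipartite slab, the interval $[j+1,j+n]$, and its whole interval is $[j+\ell_i,\,(r-1)n+j+\ell_i]$ for a part-dependent offset $\ell_i$. The vertex colors are then set individually (e.g.\ $\alpha(v^{(1)}_j)=j$), so every vertex---not only those in extremal parts---has a $j$-shifted interval. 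This is what buys the extra $n-1$ colors.

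Second, for odd $r$ (hence $n=2m$ even), the paper does \emph{not} split a single middle part into two halves of size $n/2$. It splits \emph{every} part into two blocks of size $m$, obtaining $2r$ blocks whose adjacency pattern is $K_{2r}$ minus a perfect matching, and then reruns essentially the even-$r$ construction on these $2r$ blocks. Your single-split idea leaves $r-1$ parts of size $n$ and two of size $n/2$; it is unclear how to assign consistent block offsets $b(i,j)$ across this asymmetric structure, and you would likely have to reinvent the full-splitting trick anyway.
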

\begin{proof} We distinguish our proof into two cases.

Case 1: $r$ is even.

Let $V(K_{n,\ldots,n})=\left\{v^{(i)}_{j}|~1\leq i\leq r,1\leq j\leq
n\right\}$ and

$E(K_{n,\ldots,n})=\left\{v^{(i)}_{p}v^{(j)}_{q}|~1\leq i<j\leq
r,1\leq p\leq n,1\leq q\leq n\right\}$.\\

Define a total coloring $\alpha$ of $K_{n,\ldots,n}$. First we color
the vertices of the graph as follows:

\begin{center}
$\alpha\left(v^{(1)}_{j}\right)=j$ for $1\leq j\leq n$ and
$\alpha\left(v^{(2)}_{j}\right)=(r-1)n+1+j$ for $1\leq j\leq n$,\\
$\alpha\left(v^{(i+1)}_{j}\right)=(i-1)n+j$ for $2\leq i\leq
\frac{r}{2}-1$ and $1\leq j\leq n$,\\
$\alpha\left(v^{(\frac{r}{2}+i-1)}_{j}\right)=(r+i-2)n+1+j$ for
$2\leq
i\leq \frac{r}{2}-1$ and $1\leq j\leq n$,\\
$\alpha\left(v^{(r-1)}_{j}\right)=\left(\frac{r}{2}-1\right)n+j$ for
$1\leq j\leq n$ and
$\alpha\left(v^{(r)}_{j}\right)=\left(\frac{3}{2}r-2\right)n+1+j$
for $1\leq j\leq n$.
\end{center}

Next we color the edges of the graph. For each edge
$v^{(i)}_{p}v^{(j)}_{q}\in E(K_{n,\ldots,n})$ with $1\leq i<j\leq r$
and $p=1,\ldots,n$, $q=1,\ldots,n$, define a
color $\alpha\left(v^{(i)}_{p}v^{(j)}_{q}\right)$ as follows:\\

for $i=1,\ldots,\left\lfloor\frac{r}{4}\right\rfloor$, $
j=2,\ldots,\frac{r}{2}$, $i+j\leq \frac{r}{2}+1$, let

\begin{center}
$\alpha \left(v_{p}^{(i)}v_{q}^{(j)}\right)=\left(
i+j-3\right)n+p+q$;
\end{center}

for $i=2,\ldots,\frac{r}{2}-1$,
$j=\left\lfloor\frac{r}{4}\right\rfloor+2,\ldots,\frac{r}{2}$,
$i+j\geq \frac{r}{2}+2$, let

\begin{center}
$\alpha \left(v_{p}^{(i)}v_{q}^{(j)}\right)=\left(
i+j+\frac{r}{2}-4\right)n+p+q$;
\end{center}

for $i=3,\ldots,\frac{r}{2}$, $j=\frac{r}{2}+1,\dots,r-2$, $j-i\leq
\frac{r}{2}-2$, let

\begin{center}
$\alpha \left(v_{p}^{(i)}v_{q}^{(j)}\right)=\left(
\frac{r}{2}+j-i-1\right)n+p+q$;
\end{center}

for $i=1,\ldots,\frac{r}{2}$, $j=\frac{r}{2}+1,\ldots,r$, $j-i\geq
\frac{r}{2}$, let

\begin{center}
$\alpha
\left(v_{p}^{(i)}v_{q}^{(j)}\right)=\left(j-i-1\right)n+p+q$;
\end{center}

for $i=2,\ldots,1+\left\lfloor \frac{r-2}{4}\right\rfloor$,
$j=\frac{r}{2}+1,\dots,\frac{r}{2}+\left\lfloor\frac{r-2}{4}\right\rfloor
$, $j-i=\frac{r}{2}-1$, let

\begin{center}
$\alpha \left(v_{p}^{(i)}v_{q}^{(j)}\right)=\left(2i-3\right)n+p+q$;
\end{center}

for $i=\left\lfloor
\frac{r-2}{4}\right\rfloor+2,\ldots,\frac{r}{2}$,
$j=\frac{r}{2}+1+\left\lfloor\frac{r-2}{4}\right\rfloor,\ldots,r-1$,
$j-i=\frac{r}{2}-1$, let

\begin{center}
$\alpha
\left(v_{p}^{(i)}v_{q}^{(j)}\right)=\left(i+j-3\right)n+p+q$;
\end{center}

for $i=\frac{r}{2}+1,\dots,\frac{r}{2}+\left\lfloor
\frac{r}{4}\right\rfloor-1$, $j=\frac{r}{2}+2,\ldots,r-2$, $i+j\leq
\frac{3}{2}r-1$, let

\begin{center}
$\alpha
\left(v_{p}^{(i)}v_{q}^{(j)}\right)=\left(i+j-r-1\right)n+p+q$;
\end{center}

for $i=\frac{r}{2}+1,\ldots,r-1$, $j=\frac{r}{2}+\left\lfloor
\frac{r}{4}\right\rfloor +1,\dots,r$, $i+j\geq \frac{3}{2}r$, let

\begin{center}
$\alpha \left(v_{p}^{(i)}v_{q}^{(j)}\right)=\left(
i+j-\frac{r}{2}-2\right)n+p+q$.
\end{center}

Let us prove that $\alpha$ is an interval total
$\left(\left(\frac{3}{2}r-1\right)n+1\right)$-coloring of
$K_{n,\ldots,n}$.

First we show that for each $c\in
\left[1,\left(\frac{3}{2}r-1\right)n+1\right]$, there is $ve\in
VE(K_{n,\ldots,n})$ with $\alpha(ve)=c$.

Consider the vertices $v_{1}^{(1)},\ldots,v_{n}^{(1)}$ and
$v_{1}^{(r)},\ldots,v_{n}^{(r)}$. By the definition of $\alpha$, for
$1\leq j\leq n$, we have

\begin{center}
$S\left[v_{j}^{(1)},\alpha\right]=S\left(v_{j}^{(1)},\alpha\right)\cup
\left\{\alpha\left(v_{j}^{(1)}\right)\right\}
=\left(\bigcup_{l=1}^{r-1}\left([j+1,j+n]\oplus
(l-1)n\right)\right)\cup \{j\}=$\\
$=[j+1,(r-1)n+j]\cup \{j\}=[j,(r-1)n+j]$ and
$S\left[v_{j}^{(r)},\alpha\right]=S\left(v_{j}^{(r)},\alpha\right)\cup
\left\{\alpha\left(v_{j}^{(r)}\right)\right\}=$\\
$=\left(\bigcup_{l=\frac{r}{2}}^{\frac{3}{2}r-2}\left([j+1,j+n]\oplus
(l-1)n\right)\right)\cup
\left\{\left(\frac{3}{2}r-2\right)n+1+j\right\}=$\\
$=\left[\left(\frac{r}{2}-1\right)n+1+j,\left(\frac{3}{2}r-2\right)n+j\right]\cup
\left\{\left(\frac{3}{2}r-2\right)n+1+j\right\}=$\\
$=\left[\left(\frac{r}{2}-1\right)n+1+j,\left(\frac{3}{2}r-2\right)n+1+j\right]$.
\end{center}

Let $\overline{C}=\bigcup_{j=1}^{n}
S\left[v_{j}^{(1)},\alpha\right]$ and
$\overline{\overline{C}}=\bigcup_{j=1}^{n}
S\left[v_{j}^{(r)},\alpha\right]$. It is straightforward to check
that $\overline{C}\cup
\overline{\overline{C}}=\left[1,\left(\frac{3}{2}r-1\right)n+1\right]$,
so for each $c\in \left[1,\left(\frac{3}{2}k-1\right)n+1\right]$,
there is $ve\in VE(K_{n,\ldots,n})$ with $\alpha(ve)=c$.

Next we show that the edges incident to each vertex of
$K_{n,\ldots,n}$ together with this vertex are colored by $(r-1)n+1$
consecutive colors.

Let $v_{j}^{(i)}\in V(K_{n,\ldots,n})$, where $1\leq i\leq r$,
$1\leq j\leq n$.

Subcase 1.1. $1\leq i\leq 2$, $1\leq j\leq n$.

By the definition of $\alpha$, we have

\begin{center}
$S\left[v_{j}^{(1)},\alpha\right]=S\left(v_{j}^{(1)},\alpha\right)\cup
\left\{\alpha\left(v_{j}^{(1)}\right)\right\}=\left(
\bigcup_{l=1}^{r-1}\left([j+1,j+n]\oplus (l-1)n\right)\right)\cup \{j\}=$\\
$=[j+1,(r-1)n+j]\cup \{j\}=[j,(r-1)n+j]$ and\\
$S\left[v_{j}^{(2)},\alpha\right]=S\left(v_{j}^{(2)},\alpha\right)\cup
\left\{\alpha\left(v_{j}^{(2)}\right)\right\}=\left(
\bigcup_{l=1}^{r-1}\left([j+1,j+n]\oplus (l-1)n\right)\right)\cup \{(r-1)n+1+j\}=$\\
$=[j+1,(r-1)n+1+j]$.
\end{center}

Subcase 1.2. $3\leq i\leq \frac{r}{2}$, $1\leq j\leq n$.

By the definition of $\alpha$, we have

\begin{center}
$S\left[v_{j}^{(i)},\alpha\right]=S\left(v_{j}^{(i)},\alpha\right)\cup
\left\{\alpha\left(v_{j}^{(i)}\right)\right\}
=\left(\bigcup_{l=i-1}^{r-3+i}\left([j+1,j+n]\oplus
(l-1)n\right)\right)\cup
\{(i-2)n+j\}=$\\
$=\left[(i-2)n+1+j,(r-3+i)n+j\right]\cup
\{(i-2)n+j\}=[(i-2)n+j,(r-3+i)n+j]$.
\end{center}

Subcase 1.3. $\frac{r}{2}+1\leq i\leq r-2$, $1\leq j\leq n$.

By the definition of $\alpha$, we have

\begin{center}
$S\left[v_{j}^{(i)},\alpha\right]=S\left(v_{j}^{(i)},\alpha\right)\cup
\left\{\alpha\left(v_{j}^{(i)}\right)\right\}=$\\
$=\left(\bigcup_{l=i-\frac{r}{2}+1}^{\frac{r}{2}-1+i}\left([j+1,j+n]\oplus
(l-1)n\right)\right)\cup
\left\{\left(\frac{r}{2}+i-1\right)n+1+j\right\}=$\\
$=\left[\left(i-\frac{r}{2}\right)n+1+j,\left(\frac{r}{2}+i-1\right)n+j\right]\cup
\left\{\left(\frac{r}{2}+i-1\right)n+1+j\right\}=\left[\left(i-\frac{r}{2}\right)n+1+j,\left(\frac{r}{2}+i-1\right)n+1+j\right]$.
\end{center}

Subcase 1.4. $r-1\leq i\leq r,1\leq j\leq n$.

By the definition of $\alpha$, we have

\begin{center}
$S\left[v_{j}^{(r-1)},\alpha\right]=S\left(v_{j}^{(r-1)},\alpha\right)\cup
\left\{\alpha\left(v_{j}^{(r-1)}\right)\right\}=\left(\bigcup_{l=\frac{r}{2}}^{\frac{3}{2}r-2}\left([j+1,j+n]\oplus
(l-1)n\right)\right)\cup
\left\{\left(\frac{r}{2}-1\right)n+j\right\}=\left[\left(\frac{r}{2}-1\right)n+1+j,\left(\frac{3}{2}r-2\right)n+j\right]\cup
\left\{\left(\frac{r}{2}-1\right)n+j\right\}=\left[\left(\frac{r}{2}-1\right)n+j,\left(\frac{3}{2}r-2\right)n+j\right]$
and\\
$S\left[v_{j}^{(r)},\alpha\right]=S\left(v_{j}^{(r)},\alpha\right)\cup
\left\{\alpha\left(v_{j}^{(r)}\right)\right\}=\left(\bigcup_{l=\frac{r}{2}}^{\frac{3}{2}r-2}\left([j+1,j+n]\oplus
(l-1)n\right)\right)\cup
\left\{\left(\frac{3}{2}r-2\right)n+1+j\right\}=\left[\left(\frac{r}{2}-1\right)n+1+j,\left(\frac{3}{2}r-2\right)n+j\right]\cup
\left\{\left(\frac{3}{2}r-2\right)n+1+j\right\}=\left[\left(\frac{r}{2}-1\right)n+1+j,\left(\frac{3}{2}r-2\right)n+1+j\right]$.
\end{center}

This shows that $\alpha$ is an interval total
$\left(\left(\frac{3}{2}r-1\right)n+1\right)$-coloring of
$K_{n,\ldots,n}$; thus $W_{\tau }(K_{n,\ldots,n})\geq
\left(\frac{3}{2}r-1\right)n+1$ for even $r\geq 2$.

Case 2: $n$ is even.

Let $n=2m$, $m\in \mathbf{N}$. Let
$S_{i}=\left\{u_{1}^{(i)},\ldots,u_{m}^{(i)},u_{1}^{(r+i)},\ldots,u_{m}^{(r+i)}\right\}$
($1\leq i\leq r$) be the $r$ independent sets of vertices of
$K_{n,\ldots,n}$. For $i=1,\ldots,2r$, define the set $U_{i}$ as
follows: $U_{i}=\left\{u_{1}^{(i)},\ldots,u_{m}^{(i)}\right\}$.
Clearly, $V(K_{n,\ldots,n})=\bigcup_{i=1}^{2r}U_{i}$. For $1\leq
i<j\leq 2r$, define $(U_{i},U_{j})$ as the set of all edges between
$U_{i}$ and $U_{j}$. It is easy to see that for $1\leq i<j\leq 2r$,
$\left\vert (U_{i},U_{j})\right\vert=m^{2}$ except for $\left\vert
(U_{i},U_{r+i})\right\vert=0$ whenever $i=1,\ldots,r$. If we
consider the sets $U_{i}$ as the vertices and the sets
$(U_{i},U_{j})$ as the edges, then we obtain that $K_{n,\ldots,n}$
is isomorphic to the graph $K_{2r}-F$, where $F$ is a perfect
matching of $K_{2r}$. Now we define a total coloring $\beta$ of the
graph $K_{n,\ldots,n}$. First we color the vertices of the graph as
follows:

\begin{center}
$\beta\left(u^{(1)}_{j}\right)=j$ for $1\leq j\leq m$ and
$\beta\left(u^{(2)}_{j}\right)=(2r-2)m+1+j$ for $1\leq j\leq m$,\\
$\beta\left(u^{(i+1)}_{j}\right)=(i-1)m+j$ for $2\leq i\leq
r-1$ and $1\leq j\leq m$,\\
$\beta\left(u^{(r+i-1)}_{j}\right)=(2r+i-3)m+1+j$ for $2\leq
i\leq r-1$ and $1\leq j\leq m$,\\
$\beta\left(u^{(2r-1)}_{j}\right)=(r-1)m+j$ for $1\leq j\leq m$ and
$\beta\left(u^{(2r)}_{j}\right)=(3r-3)m+1+j$ for $1\leq j\leq m$.
\end{center}

Next we color the edges of the graph. For each edge
$u^{(i)}_{p}u^{(j)}_{q}\in E(K_{n,\ldots,n})$ with $1\leq i<j\leq
2r$ and $p=1,\ldots,m$, $q=1,\ldots,m$, define a
color $\beta\left(u^{(i)}_{p}u^{(j)}_{q}\right)$ as follows:\\

for $i=1,\ldots,\left\lfloor\frac{r}{2}\right\rfloor$, $
j=2,\ldots,r$, $i+j\leq r+1$, let

\begin{center}
$\beta \left(u_{p}^{(i)}u_{q}^{(j)}\right)=\left(
i+j-3\right)m+p+q$;\\
\end{center}

for $i=2,\ldots,r-1$,
$j=\left\lfloor\frac{r}{2}\right\rfloor+2,\ldots,r$, $i+j\geq r+2$,
let

\begin{center}
$\beta\left(u_{p}^{(i)}u_{q}^{(j)}\right)=\left(
i+j+r-5\right)m+p+q$;\\
\end{center}

for $i=3,\ldots,r$, $j=r+1,\dots,2r-2$, $j-i\leq r-2$, let

\begin{center}
$\beta\left(u_{p}^{(i)}u_{q}^{(j)}\right)=\left(
r+j-i-2\right)m+p+q$;\\
\end{center}

for $i=1,\ldots,r-1$, $j=r+2,\ldots,2r$, $j-i\geq r+1$, let

\begin{center}
$\beta
\left(u_{p}^{(i)}u_{q}^{(j)}\right)=\left(j-i-2\right)m+p+q$;\\
\end{center}

for $i=2,\ldots,1+\left\lfloor \frac{r-1}{2}\right\rfloor$,
$j=r+1,\dots,r+\left\lfloor\frac{r-1}{2}\right\rfloor $, $j-i=r-1$,
let

\begin{center}
$\beta
\left(u_{p}^{(i)}u_{q}^{(j)}\right)=\left(2i-3\right)m+p+q$;\\
\end{center}

for $i=\left\lfloor \frac{r-1}{2}\right\rfloor+2,\ldots,r$,
$j=r+1+\left\lfloor\frac{r-1}{2}\right\rfloor,\ldots,2r-1$,
$j-i=r-1$, let

\begin{center}
$\beta
\left(u_{p}^{(i)}u_{q}^{(j)}\right)=\left(i+j-4\right)m+p+q$;\\
\end{center}

for $i=r+1,\dots,r+\left\lfloor\frac{r}{2}\right\rfloor-1$,
$j=r+2,\ldots,2r-2$, $i+j\leq 3r-1$, let

\begin{center}
$\beta
\left(u_{p}^{(i)}u_{q}^{(j)}\right)=\left(i+j-2r-1\right)m+p+q$;\\
\end{center}

for $i=r+1,\ldots,2r-1$, $j=r+\left\lfloor \frac{r}{2}\right\rfloor
+1,\dots,2r$, $i+j\geq 3r$, let

\begin{center}
$\beta \left(u_{p}^{(i)}u_{q}^{(j)}\right)=\left(
i+j-r-3\right)m+p+q$.\\
\end{center}

Let us prove that $\beta$ is an interval total
$\left(\left(\frac{3}{2}r-1\right)n+1\right)$-coloring of the graph
$K_{n,\ldots,n}$.

First we show that for each $c\in
\left[1,\left(\frac{3}{2}r-1\right)n+1\right]$, there is $ve\in
VE(K_{n,\ldots,n})$ with $\beta(ve)=c$.

Consider the vertices $u_{1}^{(1)},\ldots,u_{m}^{(1)}$ and
$u_{1}^{(2r)},\ldots,u_{m}^{(2r)}$. By the definition of $\beta$,
for $1\leq j\leq m$, we have

\begin{center}
$S\left[u_{j}^{(1)},\beta\right]=S\left(u_{j}^{(1)},\beta\right)\cup
\left\{\beta\left(u^{(1)}_{j}\right)\right\}
=\left(\bigcup_{l=1}^{2r-2}\left([j+1,j+m]\oplus
(l-1)m\right)\right)\cup\{j\}=[j+1,(2r-2)m+j]\cup\{j\}=[j,(2r-2)m+j]$
and
$S\left[u_{j}^{(2r)},\beta\right]=S\left(u_{j}^{(2r)},\beta\right)\cup
\left\{\beta\left(u^{(2r)}_{j}\right)\right\}
=\left(\bigcup_{l=r}^{3r-3}\left([j+1,j+m]\oplus
(l-1)m\right)\right)\cup
\{\left(3r-3\right)m+1+j\}=[(r-1)m+1+j,(3r-3)m+j]\cup\{(3r-3)m+1+j\}=[(r-1)m+1+j,(3r-3)m+1+j]$.
\end{center}

Let $\tilde{C}=\bigcup_{j=1}^{m} S\left[u_{j}^{(1)},\beta\right]$
and $\tilde{\tilde{C}}=\bigcup_{j=1}^{m}
S\left[u_{j}^{(2r)},\beta\right]$. It is straightforward to check
that $\tilde{C}\cup
\tilde{\tilde{C}}=\left[1,\left(\frac{3}{2}r-1\right)n+1\right]$, so
for each $c\in \left[1,\left(\frac{3}{2}r-1\right)n+1\right]$, there
is $ve\in VE(K_{n,\ldots,n})$ with $\beta(ve)=c$.

Next we show that the edges incident to each vertex of
$K_{n,\ldots,n}$ together with this vertex are colored by $(r-1)n+1$
consecutive colors.

Let $v_{j}^{(i)}\in V(K_{n,\ldots,n})$, where $1\leq i\leq 2r$,
$1\leq j\leq m$.

Subcase 2.1. $1\leq i\leq 2$, $1\leq j\leq m$.

By the definition of $\beta$, we have

\begin{center}
$S\left[v_{j}^{(1)},\beta\right]=S\left(u_{j}^{(1)},\beta\right)\cup
\left\{\beta\left(u^{(1)}_{j}\right)\right\}
=\left(\bigcup_{l=1}^{2r-2}\left([j+1,j+m]\oplus
(l-1)m\right)\right)\cup\{j\}=[j+1,(2r-2)m+j]\cup\{j\}=[j,(2r-2)m+j]$
and
$S\left[u_{j}^{(2)},\beta\right]=S\left(u_{j}^{(2)},\beta\right)\cup
\left\{\beta\left(u^{(2)}_{j}\right)\right\}
=\left(\bigcup_{l=1}^{2r-2}\left([j+1,j+m]\oplus
(l-1)m\right)\right)\cup\{(2r-2)m+1+j\}=[j+1,(2r-2)m+j]\cup\{(2r-2)m+1+j\}=[j+1,(2r-2)m+1+j]$.
\end{center}

Subcase 2.2. $3\leq i\leq r$, $1\leq j\leq m$.

By the definition of $\beta$, we have

\begin{center}
$S\left[u_{j}^{(i)},\beta\right]=S\left(u_{j}^{(i)},\beta\right)\cup
\left\{\beta\left(u^{(i)}_{j}\right)\right\}
=\left(\bigcup_{l=i-1}^{2r-4+i}\left([j+1,j+m]\oplus
(l-1)m\right)\right)\cup \{(i-2)m+j\}
=\left[(i-2)m+1+j,(2r-4+i)m+j\right]\cup \{(i-2)m+j\}
=\left[(i-2)m+j,(2r-4+i)m+j\right]$.
\end{center}

Subcase 2.3. $r+1\leq i\leq 2r-2$, $1\leq j\leq m$.

By the definition of $\beta$, we have

\begin{center}
$S\left[u_{j}^{(i)},\beta\right]=S\left(u_{j}^{(i)},\beta\right)\cup
\left\{\beta\left(u^{(i)}_{j}\right)\right\}
=\left(\bigcup_{l=i-r+1}^{r-2+i}\left([j+1,j+m]\oplus
(l-1)m\right)\right)\cup
\{(r+i-2)m+1+j\}=\left[(i-r)m+1+j,(r+i-2)m+j\right]\cup
\{(r+i-2)m+1+j\}=\left[(i-r)m+1+j,(r+i-2)m+1+j\right]$.
\end{center}

Subcase 2.4. $2r-1\leq i\leq 2r,1\leq j\leq m$.

By the definition of $\beta$, we have
\begin{center}
$S\left[u_{j}^{(2r-1)},\beta\right]=S\left(u_{j}^{(2r-1)},\beta\right)\cup
\left\{\beta\left(u^{(2r-1)}_{j}\right)\right\}
=\left(\bigcup_{l=r}^{3r-3}\left([j+1,j+m]\oplus
(l-1)m\right)\right)\cup
\{(r-1)m+j\}=[(r-1)m+1+j,(3r-3)m+j]\cup\{(r-1)m+j\}=[(r-1)m+j,(3r-3)m+j]$
and
$S\left[u_{j}^{(2r)},\beta\right]=S\left(u_{j}^{(2r)},\beta\right)\cup
\left\{\beta\left(u^{(2r)}_{j}\right)\right\}
=\left(\bigcup_{l=r}^{3r-3}\left([j+1,j+m]\oplus
(l-1)m\right)\right)\cup
\{\left(3r-3\right)m+1+j\}=[(r-1)m+1+j,(3r-3)m+j]\cup\{(3r-3)m+1+j\}=[(r-1)m+1+j,(3r-3)m+1+j]$.
\end{center}

This shows that $\beta$ is an interval total
$\left(\left(\frac{3}{2}r-1\right)n+1\right)$-coloring of
$K_{n,\ldots,n}$; thus $W_{\tau }(K_{n,\ldots,n})\geq
\left(\frac{3}{2}r-1\right)n+1$ for even $n\geq 2$. $~\square$
\end{proof}
\bigskip

\section{Interval total colorings of hypercubes}\

In \cite{Petros3}, the first author investigated interval colorings
of hypercubes $Q_{n}$. In particular, he proved that $Q_{n}\in {\cal
N}$ and $w(Q_{n})=n$, $W(Q_{n})\geq\frac{n(n+1)}{2}$ for any
$n\in\mathbf{N}$. In the same paper he also conjectured that
$W(Q_{n}) =\frac{n(n+1)}{2}$ for any $n\in\mathbf{N}$. In
\cite{PetKhachTan}, the authors confirmed this conjecture. Here, we
prove that $W_{\tau}(Q_{n})=\frac{(n+1)(n+2)}{2}$ for any
$n\in\mathbf{N}$.

\begin{theorem}
\label{mytheorem12} If $n\in\mathbf{N}$, then
$W_{\tau}(Q_{n})=\frac{(n+1)(n+2)}{2}$.
\end{theorem}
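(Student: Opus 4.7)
The plan rests on a single observation: the hypercube $Q_{n+1}$ is obtained from two disjoint copies of $Q_n$ by adding a perfect matching between corresponding vertices, so any interval total $t$-coloring of $Q_n$ can be converted into an interval (edge) $t$-coloring of $Q_{n+1}$. Once this reduction is in place, the cited identity $W(Q_{n+1})=\frac{(n+1)(n+2)}{2}$ of \cite{PetKhachTan} yields the required upper bound. The matching lower bound $W_\tau(Q_n)\ge \frac{(n+1)(n+2)}{2}$ is already part (3) of Theorem \ref{mytheorem7}, so only the upper bound needs work.

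Concretely, I would take any interval total $t$-coloring $\alpha$ of $Q_n$, write the vertex set of $Q_{n+1}$ as $V(Q_n)\times\{0,1\}$, and define an edge coloring $\beta$ of $Q_{n+1}$ by $\beta((u,i)(v,i))=\alpha(uv)$ on each horizontal edge (a copy of an edge $uv\in E(Q_n)$) and $\beta((v,0)(v,1))=\alpha(v)$ on each vertical edge. The construction is essentially forced by a degree count: each vertex $(v,i)$ of $Q_{n+1}$ meets exactly $n+1$ edges, namely the $n$ horizontal copies of the edges at $v$ in $Q_n$ together with the single vertical edge at $v$, matching the size $n+1$ of the palette $S[v,\alpha]$; in fact the multiset of $\beta$-colors at $(v,i)$ is by construction exactly $S[v,\alpha]$.

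The verification then reduces to three routine points. First, $\beta$ is a proper edge coloring because $|S[v,\alpha]|=n+1$ whenever $\alpha$ is a proper total coloring. Second, $S((v,i),\beta)=S[v,\alpha]$ is an interval by hypothesis on $\alpha$. Third, every color in $[1,t]$ is still used under $\beta$: colors that $\alpha$ assigns to an edge $uv$ of $Q_n$ reappear on both horizontal copies of $uv$, and colors $\alpha$ assigns to a vertex $v$ reappear on the vertical edge at $v$. Hence $\beta$ is an interval edge $t$-coloring of $Q_{n+1}$, forcing $t\le W(Q_{n+1})=\frac{(n+1)(n+2)}{2}$ and therefore $W_\tau(Q_n)\le\frac{(n+1)(n+2)}{2}$.

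I do not anticipate any serious obstacle: the construction is entirely combinatorial and the argument is little more than a bookkeeping exercise once the reduction $Q_n \to Q_{n+1}$ is noticed. The only sanity check is the base case $n=1$, where $Q_1=K_2$ has just three total-coloring elements and the formula correctly predicts $W_\tau(Q_1)=3$.
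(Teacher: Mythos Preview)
Your proposal is correct and follows essentially the same argument as the paper: both convert an interval total $t$-coloring of $Q_n$ into an interval edge $t$-coloring of $Q_{n+1}$ by coloring the two horizontal copies of each $Q_n$-edge with its original color and each vertical matching edge with the color of the corresponding vertex, then invoke $W(Q_{n+1})=\frac{(n+1)(n+2)}{2}$ from \cite{PetKhachTan} together with the lower bound from Theorem~\ref{mytheorem7}. If anything, your write-up is more explicit than the paper's in verifying that the resulting edge coloring is proper, has interval spectra, and uses every color.
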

\begin{proof} First of all let us note that $W_{\tau}(Q_{n})\geq \frac{(n+1)(n+2)}{2}$ for any $n\in\mathbf{N}$, by Theorem \ref{mytheorem7}.
For the proof of the theorem, it suffices to show that
$W_{\tau}(Q_{n})\leq \frac{(n+1)(n+2)}{2}$ for any $n\in\mathbf{N}$.
Let $\varphi$ be an interval total $W_{\tau}(Q_{n})$-coloring of
$Q_{n}$.

Let $i=0$ or $1$ and $Q_{n+1}^{(i)}$ be a subgraph of the graph
$Q_{n+1}$, induced by the vertices $\left\{\left( i,\alpha
_{2},\alpha _{3},\ldots ,\alpha _{n+1}\right)|\left( \alpha
_{2},\alpha _{3},\ldots ,\alpha _{n+1}\right) \in \left\{
0,1\right\}^{n}\right\}$. Clearly, $Q_{n+1}^{(i)}$ is isomorphic to
$Q_{n}$ for $i\in \{0,1\}$.\\

Let us define an edge coloring $\psi $ of the graph $Q_{n+1}$ in the
following way:

\begin{description}
\item[(1)] for $i=0,1$ and every edge $\left(i,\bar
\alpha\right)\left(i,\bar\beta\right)\in
E\left(Q_{n+1}^{(i)}\right)$, let
\begin{center}
$\psi \left(\left(i,\bar
\alpha\right)\left(i,\bar\beta\right)\right)=\varphi \left(\bar
\alpha \bar\beta\right)$;
\end{center}

\item[(2)] for every $\bar
\alpha\in \left\{0,1\right\}^{n}$, let
\begin{center}
$\psi \left(\left(0,\bar
\alpha\right)\left(1,\bar\alpha\right)\right)=\varphi \left(\bar
\alpha \right)$.
\end{center}
\end{description}

It is not difficult to see that $\psi$ is an interval
$W_{\tau}(Q_{n})$-coloring of the graph $Q_{n+1}$. Thus,
$W_{\tau}(Q_{n})\leq W(Q_{n+1})=\frac{(n+1)(n+2)}{2}$ for any
$n\in\mathbf{N}$. $~\square$
\end{proof}

By Theorems \ref{mytheorem7} and \ref{mytheorem12}, we have that
$Q_{n}$ has an interval total $t$-coloring if and only if
$w_{\tau}(Q_{n})\leq t\leq W_{\tau}(Q_{n})$.
\bigskip

\end{document}